\newcommand{\Lb}{\mbox {\boldmath ${\Lambda}$}}  
 \newcommand{\Lbs}{\mbox{\scriptsize\boldmath ${\Lambda}$}}
  \newcommand{\Pb}{\mbox {\bf P}} 
   \newcommand{\Pbs}{\mbox{\scriptsize\bf P}}
\def\FrP{{\mathfrak P}}   
\def\Vol{{\rm Vol}}   
\def\e{\epsilon}
\def\freq{{\rm freq}}
\def\dist{{\rm dist}}
\def\supp{{\rm supp}}
\def\Xk{\Xi}
\def\Ok{{\mathcal O}}
\newcommand{\lam}{\lambda}
\def\Lam{\Lambda}
\newcommand{\gam}{\gamma}
\newcommand{\om}{\omega}
\def\Om{\Omega}
\newcommand{\by}{{\bf y}}
\def\bbe{{\bf e}}
\newcommand{\R}{{\mathbb R}}
\newcommand{\Z}{{\mathbb Z}}
\newcommand{\C}{{\mathbb C}}
\def\N{{\mathbb N}}
\def\Compl{{\mathbb C}}
\def\bo{{\bf 0}}
\def\bu{{\bf u}}
\def\bw{{\bf w}}
\def\bz{{\bf z}}
\def\Dk{{\mathcal D}}
\def\Sk{{\mathcal S}}
\def\Pk{{\mathcal P}}
\def\Sf{{\sf S}}
\def\bx{{\mathbf x}}
\def\bv{{\mathbf v}}
\def\be{\begin{equation}}
\def\ee{\end{equation}}
\newcommand{\es}{\emptyset}
\def\ov{\overline}
\def\ve1{\vec{1}}
\def\Tk{{\mathcal T}}
\def\Ak{{\mathcal A}}
\def\wt{\widetilde}
\begin{document}

\title*{Delone sets and dynamical systems}
\author{Boris Solomyak}
\institute{Boris Solomyak \at Bar-Ilan University, Ramat-Gan, Israel; \email{bsolom3@gmail.com}}
%
%
\maketitle

\abstract*{Each chapter should be preceded by an abstract (10--15 lines long) that summarizes the content. The abstract will appear \textit{online} at \url{www.SpringerLink.com} and be available with unrestricted access. This allows unregistered users to read the abstract as a teaser for the complete chapter. As a general rule the abstracts will not appear in the printed version of your book unless it is the style of your particular book or that of the series to which your book belongs.
Please use the 'starred' version of the new Springer \texttt{abstract} command for typesetting the text of the online abstracts (cf. source file of this chapter template \texttt{abstract}) and include them with the source files of your manuscript. Use the plain \texttt{abstract} command if the abstract is also to appear in the printed version of the book.}

\abstract{In these notes we focus on selected topics around the themes: Delone sets as models for quasicrystals, inflation symmetries and expansion constants,
substitution Delone sets and tilings, and associated dynamical systems.
}

\section{Introduction}
This is an expository article, based on a 1.5-hour talk delivered at the School on Tilings and Dynamics, held in CIRM, Luminy (November 2017). There are no new results here; most of the content is at least 10 years old. The goal of the talk was to supplement the lectures of N. P. Frank \cite{FrankLN}, and the notes are written in the same spirit. There is, of course, too much material here for a single lecture, but we tried to present at least a somewhat complete picture of the subject, from a personal viewpoint. Two themes that we tried to emphasize are (a) the characterization of expansion symmetries, and (b) the duality of substitution Delone sets and substitution tilings.

In Sections 2-4 we develop the notions of Delone sets and their classification, as well as characterization of (scalar) inflation symmetries. We largely follow the paper by J. C. Lagarias \cite{Lag99},
although many original ideas are due to Y. Meyer \cite{Meyer72,Meyer95}. Some relatively easy statements are  proved completely;  other proofs are sketched, and still others are referred to the original papers. Several examples are worked out in detail to illustrate the results.

Section 5 is central for us, although it contains almost no proofs. Here we develop the notions of substitution Delone sets and substitution tilings and discuss the relation between them. In particular, {\em representable} substitution Delone sets (or rather, ``multi-color'' Delone sets, or $m$-sets) are introduced. Much of the section is based on the paper by J. C. Lagarias and Y. Wang \cite{LaWa}, with a few important additions from \cite{LMS1}. We include a discussion of a theorem about pseudo-self-affine tilings being mutually locally derivable with self-affine tilings, following \cite{SolPseudo}, since this is a nice application of the duality between substitution Delone sets and tilings.

The last Section 6 is devoted to dynamical systems arising from tilings and Delone sets. Since this is one of the main themes of \cite{FrankLN}, we have limited ourselves to a very brief exposition, highlighting the aspects related to the topics developed in earlier sections, specifically, the number-theoretic issues arising from the description of eigenvalues and the Meyer property.


\section{Delone sets of finite type}

Here we borrow much from the paper \cite{Lag99}.

\begin{definition}{
A discrete set $X$ in $\R^d$ is a {\em Delone set} if it is

(a) {\em Uniformly discrete:} there exists $r>0$ such that every open ball of radius $r$ contains at most one point of $X$; equivalently, the distance between distinct points in $X$ is at least $2r$.

(b) {\em Relatively dense:} there exists $R>0$ such that every closed ball of radius $R$ contains at least one point of $X$.
}
\end{definition}

Delone sets are also sometimes called {\em separated nets}. The notion of Delone sets as fundamental objects of study in crystallography was introduced by the Russian school in the 1930's; in particular, by Boris Delone (or Delaunay). One can think about a Delone set as an idealized model of an atomic structure of a material without ``holes''. This is obviously too general to be considered an ``ordered'' structure, so we impose some conditions on the set.

For a set $X\subset \R^d$ denote by $[X]$ the abelian group (equivalently, $\Z$-module) generated by $X$. Recall  that $[X] := \bigl\{\sum_{i=1}^k n_i x_i:\, k\in \N,\ n_i\in \Z\bigr\}$.
The following  classes of Delone sets have been studied:

\begin{definition} {
Let $X$ be a Delone set in $\R^d$.

(a) $X$ is {\em finitely generated} if $[X-X]$ (equivalently, $[X]$) is a finitely generated abelian group.

(b) $X$ is of {\em finite type} if $X-X$ is a discrete closed subset of $\R^d$; i.e., the intersection of $X-X$ with any ball is a finite set.

}
\end{definition}

In the next section we will add to these the notion of {\em Meyer set}.

\begin{exercise}[easy]
Give an example of a finitely generated Delone set which is not of finite type.
\end{exercise}

We will see many such examples in Section 3.

\begin{exercise}[easy]
Show that a Delone set $X$ is of finite type if and only if  it has {\em finite local complexity} (FLC), that is, for any $\rho>0$ there are finitely many ``local clusters'' $X\cap B(\by,\rho)$, up to translation.
\end{exercise}

We will prove that every Delone set of finite type is finitely generated, but first we 
introduce the important notion of {\em address map}. Recall that a finitely generated subgroup of $\R^d$ is free. We can choose a free basis, its cardinality is independent of the choice of basis; it is called the {\em rank} of the free group.

\begin{definition}{ 
Let $X$ be a finitely generated Delone set. Choose a basis of $[X]$, say,
$$
[X] = [\bv_1,\bv_2,\ldots,\bv_s].
$$
The address map $\phi:[X]\to \Z^s$ associated to this basis is
$$
\phi\Bigl( \sum_{i=1}^s n_i \bv_i \Bigr) = (n_1,\ldots,n_s).
$$
}
\end{definition}

The address map for the Delone set of control points of a self-similar tiling has been used by Thurston \cite{Thur} (we describe it in Section 4).
The address map is not unique, it depends on the choice of a basis, so it is determined up to left-multiplication by an element of of $GL(s,\Z)$. 
Observe that $s\ge d$ since the linear span of a Delone set is obviously the entire $\R^d$. If $s=d$, then $X$ is a subset of a {\em lattice};  this is a special case --- usually, $s>d$. 
 Quoting J. Lagarias \cite{Lag99}:
\begin{quote}The structure of a finitely generated Delone set is to some extent analyzable by studying its image in $\R^s$ under the address map. The address map describes $X$ using $s$ ``internal dimensions.''
\end{quote}

\begin{theorem}[see \cite{Lag99}] \label{Del-Lip}
For a Delone set $X$ in $\R^d$, the following properties are equivalent:

{\rm (i)} $X$ is a Delone set of finite type.

{\rm (ii)} $X$ is finitely generated and any address map $\phi:\,[X]\to \Z^s$ is {\bf globally Lipshitz on $X$}:
$$
\|\phi(\bx) - \phi(\bx') \|\le C_0 \|\bx-\bx'\|,\ \ \mbox{for all}\ \bx,\bx'\in X,
$$
for some constant $C_0$ depending on $\phi$.
\end{theorem}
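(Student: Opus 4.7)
The plan is to attack the two directions by quite different methods: for (i)$\Rightarrow$(ii) by a telescoping/chaining argument that produces an explicit finite generating set, and for (ii)$\Rightarrow$(i) by a pigeonhole in the image lattice $\Z^s$.

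For (i)$\Rightarrow$(ii), my first step is to identify the candidate finite generating set $D := (X-X) \cap \overline{B(\bo, 3R)}$, where $R$ is the relative-density constant; this $D$ is finite by the finite-type hypothesis. The geometric idea is then to chain an arbitrary pair $\bx, \by \in X$ by a sequence $\bx = \bz_0, \bz_1, \dots, \bz_n = \by$ in $X$, obtained by walking along the segment $[\bx, \by]$ in steps of length $\le R$ and using relative density to replace each intermediate segment point by a point of $X$ within $R$. Consecutive $\bz_i$'s then differ by at most $3R$, so $\bz_{i+1} - \bz_i \in D$, and $\by - \bx = \sum_{i=0}^{n-1}(\bz_{i+1} - \bz_i)$ is a $\Z$-sum of at most $n \le \|\by - \bx\|/R + 1$ elements of $D$. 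This already shows $[X-X]$ is finitely generated, hence $X$ is finitely generated. Next, fix any address map $\phi : [X] \to \Z^s$; since $\phi$ is a group isomorphism, hence $\Z$-linear, applying it to the telescoping identity yields $\|\phi(\bx) - \phi(\by)\| \le n M$ with $M := \max_{d \in D}\|\phi(d)\|$. The harmless ``$+1$'' in the bound on $n$ is absorbed into a multiplicative constant via uniform discreteness ($\|\bx - \by\| \ge 2r$ for distinct $\bx, \by \in X$), giving a genuine global Lipschitz bound on $X$ with $C_0 = M\bigl(R^{-1} + (2r)^{-1}\bigr)$.

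For (ii)$\Rightarrow$(i), the argument is short. Fix $\rho > 0$ and take any $d = \bx - \by \in (X-X) \cap \overline{B(\bo, \rho)}$ with $\bx, \by \in X$. Linearity of $\phi$ together with the Lipschitz hypothesis forces $\|\phi(d)\| = \|\phi(\bx) - \phi(\by)\| \le C_0 \rho$, so $\phi$ maps $(X-X)\cap \overline{B(\bo, \rho)}$ into the finite set $\Z^s \cap \overline{B(\bo, C_0\rho)}$. Since $\phi$ is injective on $[X]$, the set $(X-X)\cap \overline{B(\bo, \rho)}$ is itself finite; varying $\rho$ gives finite type. The only mildly delicate point in the whole argument is arranging the constants in the chaining step so that the additive slack is absorbed, but uniform discreteness handles this automatically, so I do not foresee any real obstacle.
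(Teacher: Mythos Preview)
Your proposal is correct and follows essentially the same approach as the paper: for (i)$\Rightarrow$(ii) both arguments walk along the segment $[\bx,\by]$ in short steps, replace the intermediate waypoints by nearby points of $X$ via relative density, telescope, and absorb the ``$+1$'' using uniform discreteness (the paper isolates this as a separate chaining lemma with constants $k=4$, $C=(2R)^{-1}+r^{-1}$, whereas your version gives $k=3$, $C=R^{-1}+(2r)^{-1}$); for (ii)$\Rightarrow$(i) both use the same pigeonhole in $\Z^s$ via injectivity of $\phi$. The only differences are cosmetic choices of step size and the resulting constants.
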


\begin{proof}[sketch] (ii) $\Rightarrow$ (i). If $\bx,\bx'\in X$ are such that $\|\bx-\bx'\|\le T$, then by hypothesis,
$$
\|\phi(\bx-\bx')\| = \|\phi(\bx) - \phi(\bx')\| \le C_0 T.
$$
Addresses all lie in $\Z^s$, hence there are only finitely many choices for $\phi(\bx-\bx')$, hence for $\bx-\bx'$, since $\phi:X\to \Z^s$ is one-to-one. It follows that $(X-X)\cap B(\bo,T)$ is finite, so $X$ is of finite type. \qed

\begin{lemma}[see \cite{Lag99}] \label{lem-lag1}
Let $X$ be a Delone set in $\R^d$ with parameters $(r,R)$. Then there exist constants $k>1$ and $C = C(R,r)$, such that given any two points $\bx,\bx'\in X$ there is a chain of points
$$
\bx = \bx_0, \bx_1, \bx_2,\ldots,  \bx_m = \bx'
$$
in $X$ such that 

{\rm (a)} $\|\bx_i -\bx_{i-1}\|\le kR$ for all $i$;

{\rm (b)} $m \le C\|\bx - \bx'\|$. 
\end{lemma}

 Lagarias \cite{Lag99} proved that one can take $k=2$ and $C(R,r) = 4R/r^2$. We  give a simple proof for $k=4$ and $C(R,r) = (2R)^{-1} + r^{-1}$.
 
 \begin{sloppypar}
 \begin{proof}[Lemma \!\!\!\ref{lem-lag1}]
 Let $m= \lfloor \frac{\|\bx-\bx'\|}{2R}\rfloor + 1$ and consider the points $\bx=\bx_0', \bx_1',\ldots,\bx_{m-1}', \bx=
 \bx_m'$ at equal distance $\frac{\|\bx-\bx'\|}{m} \le 2R$ from each other. By the $(r,R)$-property, we can choose $\bx_i\in X$, for $i=1,\ldots,m-1$, such that $\|\bx_i - \bx_i'\| \le R$. Then $\|\bx_i - \bx_{i+1}\|\le R + 2R + R = 4R$ for all $i=0,\ldots,m-1$. It remains to note that
 $$
 m \le \frac{\|\bx-\bx'\|}{2R} + 1 \le \frac{\|\bx-\bx'\|}{2R} + \frac{\|\bx-\bx'\|}{r},
 $$
 and the proof is complete. \qed
 \end{proof}
 \end{sloppypar}

Now we can deduce the implication (i) $\Rightarrow$ (ii). By Lemma~\ref{lem-lag1}, $[X-X]$ is generated by $\{\bx-\bx':\ \|\bx-\bx'\|\le kR\}$, which is finite; thus $X$ 
 is finitely generated. Let $\phi:[X]\to \Z^s$ be an address map. Let
$$
C_1 := \max\{\|\phi(y)\|:\ y\in (X-X)\cap B(\bo, kR)\}.
$$
Given $\bx,\bx'$, by Lemma~\ref{lem-lag1} we have a chain in $X$ connecting $\bx$ to $\bx'$ which  satisfies $\|\bx_i-\bx_{i+1}\|\le kR$, and the number of points is at most $C\|\bx-\bx'\|$.
Using $\Z$-linearity of $\phi$ on $[X]$, we can write
\begin{eqnarray*}
\|\phi(\bx)-\phi(\bx')\| & \le & \sum_{i=1}^m \|\phi(\bx_i)- \phi(\bx_{i-1}\| \\
& = & \sum_{i=1}^m \|\phi(\bx_i-\bx_{i-1}\| \\
& \le & C_1 m \le C_1C\|\bx-\bx'\|,
\end{eqnarray*}
which proves (ii). \qed
\end{proof}

\section{Meyer sets}

Meyer sets were introduced by Yves Meyer \cite{Meyer95}  under the name ``quasicrystal''; now they are generally called Meyer sets, see \cite{Lag96,Moody97}. In fact, the concept was developed by Meyer much earlier, in the late 1960's, see \cite{Meyer72}. 

\begin{definition}{
Let $X$ be a Delone set in $\R^d$. It is called a {\em Meyer set} if the self-difference set $X-X$ is uniformly discrete, equivalently, a Delone set.
}
\end{definition}

Actually, this was not the original definition. An interesting feature of Meyer sets is that they can be characterized in seemingly very different terms: using discrete geometry, almost linear mappings, and cut-and-project sets. 

\begin{definition} {
Let $\R^n = E^d \oplus H$, where $E^d\approx \R^d$ (the ``physical space'') and $H\approx \R^m$ (the ``internal space) are  linear subspaces of $\R^n$, not necessarily orthogonal. Let $\pi$ and $\pi_{\rm int}$ be the projection onto
 $E^d$ parallel to $H$, and the projection to $H$ parallel to $E^d$, respectively. A {\em window}  $\Om$ is a bounded open subset of $H$.
Let $\Lam$ be a full rank lattice in $\R^n$
(that is, $\Lam$ is a discrete subgroup of rank $n$; equivalently, it is a subgroup which forms a Delone set). The {\em cut-and-project set} $X(\Lam,\Om)$ associated to the data $(\Lam,\Om)$ is
$$
X(\Lam,\Om) = \pi \bigl(\{\bw\in \Lam:\ \pi_{\rm int} (\bw) \in \Om\}\bigr).
$$
A cut-and-project set is called {\em nondegenerate} if $\pi$ is one-to-one on $\Lam$. It is {\em irreducible} if $\pi_{\rm int}(\Lam)$ is dense in $H$.
Cut-and-project sets (sometimes with different requirements for the window) are also called {\em model sets}.
}
\end{definition}

\begin{theorem}[Y. Meyer, J. Lagarias, see \cite{Lag99}] \label{th-meyer}
For a Delone set $X$ in $\R^d$, the following properties are equivalent:

{\rm (i)} $X$ is a Meyer set, that is, $X-X$ is a Delone set.

{\rm (ii)} there is a finite set $F$ such that $X-X \subseteq X+F$.

{\rm (iii)} $X$ is a finitely generated Delone set and the address map $\phi:[X]\to \Z^s$ is an {\em almost linear mapping},  i.e., there is a linear map $L:\R^d\to \R^s$ and $C>0$ such that
$$
\|\phi(\bx) - L\bx\|\le C\ \ \mbox{for all}\ \bx\in X.
$$

{\rm (iv)} $X$ is a subset of a non-degenerate cut-and-project set.
\end{theorem}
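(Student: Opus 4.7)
The plan is to close the cycle (iv) $\Rightarrow$ (iii) $\Rightarrow$ (ii) $\Rightarrow$ (i) via largely constructive arguments, and then to close it with (i) $\Rightarrow$ (iv), which is the substantive direction of Meyer's classical theorem.

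For (iv) $\Rightarrow$ (iii), I would lift $X$ into the lattice via $\pi^{-1}|_X$, well defined by nondegeneracy. Since $[X]\subseteq\pi(\Lambda)$ sits inside a finitely generated abelian group, it is finitely generated; a choice of basis produces the address map $\phi$. Fixing the decomposition $\R^n=E^d\oplus H$, I define $L:\R^d\to\R^s$ so that $L\bx$ records the coordinates of the ``physical'' part of $\pi^{-1}(\bx)$ in the chosen basis of $[X]$. Then $\phi(\bx)-L\bx$ records the internal part, which lies in $\Omega$ (bounded), yielding almost linearity.

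For (iii) $\Rightarrow$ (ii), given $\by\in X-X$, relative density of $X$ supplies $\bx\in X$ with $\|\by-\bx\|\le R$; almost linearity then yields $\|\phi(\by-\bx)\|\le \|L\|R+2C$, so $\phi(\by-\bx)$ lies in a finite subset of $\Z^s$, forcing $\by-\bx$ into a finite set $F\subset[X]$, and $X-X\subseteq X+F$. For (ii) $\Rightarrow$ (i), I would argue by contradiction: suppose $\by_1\ne\by_2\in X-X$ with $\|\by_1-\by_2\|<\delta$ for $\delta<r$. Writing $\by_i=\bx_i+f_i$ via (ii), if $f_1=f_2$ then $\|\by_1-\by_2\|=\|\bx_1-\bx_2\|\ge 2r$ by u.d.\ of $X$, contradicting $\delta<r$; otherwise, applying (ii) once more to $\bx_1-\bx_2=\bx_3+f_3$ and using u.d.\ of $X$ pins $\bx_3$ down to the finite set of $X$-points within $\delta$ of the finite set $(F-F)-F$. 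Tracing back, $\by_1-\by_2$ lies in some fixed finite set $S$, and the minimum of $\|s\|$ over nonzero $s\in S$ is a strictly positive lower bound on $\|\by_1-\by_2\|$, contradicting arbitrary smallness.

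The main obstacle is (i) $\Rightarrow$ (iv): from the bare assumption that $X-X$ is Delone, one must construct the ambient lattice $\Lambda$, its splitting $E^d\oplus H$, and the window $\Omega$. This is the substantive content of Meyer's classical theorem. The usual route passes through the equivalent \emph{harmonious set} condition -- for every $\eps>0$, the set of $\eps$-almost periods $\{t:|e^{2\pi i\langle t,\bx\rangle}-1|<\eps\ \forall\bx\in X\}$ is relatively dense -- from which the internal space $H$ is built as a suitable compactification of $\R^d$ and $\Lambda$ arises from the joint embedding. I would refer this step to Meyer \cite{Meyer72,Meyer95} or the streamlined treatment in Lagarias \cite{Lag99}.
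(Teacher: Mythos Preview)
Your cycle (iv)$\Rightarrow$(iii)$\Rightarrow$(ii)$\Rightarrow$(i)$\Rightarrow$(iv) is sound but organized quite differently from the paper. The paper proves (ii)$\Rightarrow$(iii)$\Rightarrow$(i), sketches (iii)$\Rightarrow$(iv) and the easy (iv)$\Rightarrow$(i), and defers (i)$\Rightarrow$(ii) to Lagarias; you instead defer (i)$\Rightarrow$(iv) to Meyer's harmonious-set machinery. The main technical contrast is in how the linear map $L$ appears. The paper \emph{constructs} $L$ from (ii) alone, as an ``ideal address map'' $L(\by)=\lim_{k\to\infty}2^{-k}\phi(\bx_k)$ where $\bx_k\in X$ satisfies $\|\bx_k-2^k\by\|\le R$, proving convergence from $X-X\subseteq X+F$ together with the Lipschitz bound of Theorem~\ref{Del-Lip}. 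You, by contrast, read $L$ off from the cut-and-project data. Your route is shorter once (iv) is granted, but it pushes more into the black box: Meyer's (i)$\Rightarrow$(iv) is a heavier theorem than Lagarias's (i)$\Rightarrow$(ii), and the paper's limit construction of $L$ is something one can actually carry out by hand. Two small fixes: in your (iii)$\Rightarrow$(ii) the bound should be $\|L\|R+3C$ rather than $2C$, since $\by-\bx$ lies in $X-X-X$; and in (iv)$\Rightarrow$(iii) your description of $L$ needs sharpening---take $L=P|_{E^d}$ where $P:\R^n\to\R^s$ is a linear left inverse of $(n_i)\mapsto\sum n_i\wt\bv_i$, with $\wt\bv_i=(\pi|_\Lambda)^{-1}(\bv_i)\in\Lambda$ the lifted basis (these are $\R$-independent, being $\Z$-independent in a lattice), so that $\phi(\bx)-L\bx=P\bigl(\pi_{\rm int}((\pi|_\Lambda)^{-1}\bx)\bigr)$ is bounded by the window.
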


The theorem is mostly due to Meyer, except that the implication (i) $\Rightarrow$ (ii) was proved by Lagarias \cite{Lag96}. 
R. V. Moody \cite{Moody97} did much to popularize the concept; his article \cite{Moody97} contains other interesting characterizations of Meyer sets, e.g., in terms of Harmonic Analysis and a certain kind of ``duals".

\begin{proof}[partial sketch] 
 (ii) $\Rightarrow$ (iii) First note that $X$ is of finite type. Indeed, $(X-X)\cap B(\bo,N) \subset (X+F)\cap B(\bo,N)$ is finite for any $N>0$. Thus it is finitely generated, and we can consider the address map $\phi$. We construct $L:\,\R^d\to \R^s$ as an ``ideal address map,'' as follows. For each $\by\in \R^d$ define
$$
L(\by):= \lim_{k\to \infty} \frac{\phi(\bx_k)}{2^k},
$$
where $\bx_k\in X$ satisfies $\|\bx_k - 2^k \by\|\le R$. Using (ii) one proves that this limit exists and is unique (independent of the choice of $\bx_k$), and is linear, roughly along these lines:
By the definition of $\bx_k,\bx_{k+1}$ and the triangle inequality we have $\|2\bx_k - \bx_{k+1}\|\le 3R$. But $$2\bx_{k} - \bx_{k+1} = \bx_k - (\bx_{k+1}-\bx_k)  = \bx_k - (\bz + \bw),$$
for some $\bz\in X$ and $\bw\in F$ (the finite set from (ii)). Thus,
$$
\|\bx_k - \bz\| \le 3R + C_1,\ \ \mbox{where}\ C_1 = \max_{\bu \in F} \|\bu\|.
$$
Since $\phi$ is $\Z$-linear on $X$, we have
$$
2\phi(\bx_k) - \phi(\bx_{k+1}) = [\phi(\bx_k) - \phi(\bz)] - \phi(\bw),
$$
hence
\begin{eqnarray*}
\|2\phi(\bx_k) - \phi(\bx_{k+1})\|  & \le & \|\phi(\bx_k) - \phi(\bz)\| + \|\phi(\bw)\|\\[1.2ex]
& \le & C_0(3R+C_1) + C_2=:C'.
\end{eqnarray*}
where $C_2 = \max_{\bu\in F} \|\phi(\bu)\|$ and $C_2$ is from Theorem~\ref{Del-Lip}.
Therefore,
$\|\frac{\phi(\bx_k)}{2^k} - \frac{\phi(\bx_{k+1})}{2^{k+1}}\|\le C'/2^{k+1}$, and convergence follows. Now it is not difficult to check linearity of $L$. For $\bx=\by\in X$ we can take $\bx_0 = \bx$
and, summing the geometric series, obtain
$$
\|\phi(\bx) - L(\bx)\| \le C'.
$$
This shows almost linearity of the address map.

(iii) $\Rightarrow$ (i) It is clear that $X-X$ is relatively dense, so we only need to show that it is uniformly discrete. Equivalently, that there is a lower bound on the norm of 
$\bz\in (X-X)-(X-X)$, whenever $\bz\ne \bo$. Suppose that $\|\bz\|\le R$. By the hypothesis (iii), we have $\|\phi(\bz)-L\bz\| \le 4C$, since $\phi$ is $\Z$-linear on $[X]$ and $L$ is linear on $\R^d$. Therefore,
$$
\|\phi(\bz)\|\le 4C + \|L\|R.
$$
Since $\phi$ is one-to-one on $[X]$ and $\phi(\bz)\in \Z^s$, we see that there are only finitely many possibilities for $\bz$. The desired claim follows.

For the equivalence of (iv) and (i)-(iii), see \cite{Lag99} or \cite{Moody97}. The direction (iv) $\Rightarrow$ (i) is simple: one needs to verify that a relatively dense subset of a cut-and-project set is Delone, then note that for a lattice $\Lam$ and a window $\Om$ we have $X(\Lam,\Om)-X(\Lam,\Om) \subseteq X(\Lam,\Om-\Om)$. 

The direction (iii) $\Rightarrow$ (iv) proceeds by taking the physical space $E^d=L(\R^d) \subset \Z^s$ and a complementary subspace $H$ as the internal subspace. We then ``lift'' $X$ to $\Z^s$ via the address map (which is, of course, a lattice in $\R^s$), project it onto $H$ and note that the projection is bounded by the property (iii). We can then choose  any bounded open set containing the projection as a window. See \cite{Lag99} for details. \qed
\end{proof}

\section{Inflation symmetries}

This section is largely based on \cite{Lag99}.

\begin{definition}{
A Delone set $X$ in $\R^d$ has an inflation symmetry by the real number $\eta>1$ if $\eta X\subseteq X$.
}
\end{definition}

Recall that a (complex) number $\eta$ is an algebraic integer if $p(\eta)=0$ for some monic polynomial $p\in \Z[x]$, that is, $p(x) = x^s + \sum_{j=0}^{s-1} c_j x^j$, with $c_j\in \Z$.
The degree of $\eta$ is the minimal degree of $p(x) \in \Z[x]$ such that $p(\eta)=0$. The algebraic (or Galois) conjugates of $\eta$ are the other roots of the minimal polynomial for $\eta$. Several classes of algebraic integers appear:

\begin{definition}
{ Let $\eta$ be a real algebraic integer greater than one.

(a) $\eta$ is a {\em Pisot number} or {\em Pisot-Vijayaraghavan (PV)}-number if all algebraic conjugates satisfy $|\eta'|<1$.

(b) $\eta$ is a {\em Salem number} if for all conjugates $|\eta'|\le 1$ and at least one satisfies $|\eta'|=1$.

(c) $\eta$ is a {\em  Perron number} if for all conjugates $|\eta'|< \eta$.

(d) $\eta$ is a {\em Lind number} if for all conjugates $|\eta'|\le \eta$ and at least one satisfies $|\eta'| = \eta$.
}
\end{definition}

``Lind numbers'' were introduced by Lagarias \cite{Lag99}, but  this apparently  did not become standard terminology.

\begin{definition}
{
A Delone set $X$ in $\R^d$ is called {\em repetitive} if for any $T>0$ there exists $M_X(T)>0$ such that every ball of radius $M_X(T)$ contains a translated copy of every $X$-cluster of radius $T$.
}
\end{definition}

\begin{theorem}[J. Lagarias \cite{Lag99}, Y. Meyer  \cite{Meyer72}] \label{th-infl}
Let $X$ be a Delone set in $\R^d$ such that $\eta X\subseteq X$ for a real number $\eta>1$.

{\rm (i)} If $X$ is finitely generated, then $X$ is an algebraic integer.

{\rm (ii)} If $X$ is a Delone set of finite type, then $\eta$ is a Perron number or a Lind number.

{\rm (ii$'$)} If $X$ is {\em repetitive}  Delone set of finite type, then $\eta$ is a Perron number.

{\rm (iii)} If $X$ is a Meyer set, then $\eta$ is a Pisot number or a Salem number.

\end{theorem}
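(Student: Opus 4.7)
The plan is to transport the inflation $\eta \cdot : X \to X$ through the address map $\phi$ into an integer matrix $M \in M_s(\Z)$, then bootstrap each of the geometric hypotheses (finitely generated, finite type, Meyer, repetitive) into a spectral bound on $M$. Since $\eta X \subseteq X$ forces $\eta[X-X] \subseteq [X-X]$, the map ``multiplication by $\eta$'' conjugated through $\phi$ is a $\Z$-linear endomorphism of $\Z^s$, i.e.\ a matrix $M \in M_s(\Z)$, so $\chi_M \in \Z[x]$. To see that $\eta$ itself is an eigenvalue of $M$, extend $\phi^{-1}:\Z^s \to [X-X]\subset \R^d$ to an $\R$-linear surjection $\pi:\R^s \to \R^d$: the intertwining $\pi M = \eta\pi$ together with the injectivity of $\pi^{*}$ places $\eta$ in the spectrum of $M^T$, hence of $M$. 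This proves (i) and also shows that all Galois conjugates of $\eta$ appear as eigenvalues of $M$, so (ii)--(iii) reduce to controlling $\mathrm{spec}(M)$.

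For (ii), I invoke the global Lipschitz bound from Theorem~\ref{Del-Lip}. For any $\bx_1,\bx_2 \in X$ also $\eta^k \bx_i \in X$, so
$$
\|M^k(\phi(\bx_1)-\phi(\bx_2))\| = \|\phi(\eta^k \bx_1)-\phi(\eta^k \bx_2)\| \le C_0 \eta^k \|\bx_1-\bx_2\|.
$$
Since $X-X$ generates $[X-X]$ as a group, $\phi(X-X)$ contains a $\Z$-basis of $\Z^s$; expanding an arbitrary unit vector of $\R^s$ in this basis yields $\|M^k\| = O(\eta^k)$, so every eigenvalue $\lambda$ of $M$ satisfies $|\lambda|\le \eta$. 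Applied to the Galois conjugates of $\eta$, this gives Perron or Lind.

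For (iii), I replace Lipschitz by the almost-linear property (Theorem~\ref{th-meyer}(iii)): there is a linear $L:\R^d \to \R^s$ with $\|\phi(\bx)-L\bx\|\le C$ on $X$. Comparing the almost-linear approximations at $\bx$ and at $\eta\bx$ via $M\phi(\bx)=\phi(\eta\bx)$ shows that $\|(ML-\eta L)\bx\|$ is bounded on the unbounded set $X$, forcing $ML = \eta L$; hence $V:=L(\R^d)$ is $M$-invariant with $M|_V = \eta I$. Applying almost-linearity to $\eta^k\bx$ now gives
$$
\|M^k \phi(\bx) - \eta^k L\bx\| \le C,
$$
and projecting to $\R^s/V$ annihilates the $V$-component, so the induced map $\overline{M}$ satisfies $\|\overline{M}^k \overline{\phi(\bx)}\| \le C$ for every $\bx \in X$ and all $k\ge 0$. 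Taking differences $\phi(\bx_1)-\phi(\bx_2)$ and again using that these contain a $\Z$-basis of $\Z^s$, $\|\overline{M}^k\|$ is uniformly bounded, so all eigenvalues of $\overline{M}$ have modulus $\le 1$ (with trivial Jordan blocks on the unit circle). Combined with the $\eta$-eigenspace $V$, the Galois conjugates of $\eta$ other than $\eta$ itself satisfy $|\eta'|\le 1$, so $\eta$ is Pisot or Salem.

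Part (ii$'$) is the main obstacle. The Lind case would require an eigenvalue $\lambda \neq \eta$ with $|\lambda|=\eta$, and this must be killed using \emph{only} repetitivity (the Meyer almost-linear bound is not available). The strategy is to use the bounded return-time function $M_X(T)$ to manufacture a weak substitute for almost-linearity: translationally congruent $T$-clusters of $X$ must have addresses differing by a controlled, sublinearly-growing amount. Iterating under $M^k$ as in part (iii) then compels bounded $\overline{M}^k$-orbits on the quotient by the generalized $\eta$-eigenspace, contradicting $|\lambda|=\eta$ for non-$\eta$ eigenvalues. Carefully extracting this sublinear almost-linearity from the combinatorics of repetitivity is the delicate step, which I would transcribe from~\cite{Lag99}.
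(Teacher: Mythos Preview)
Your approach is essentially the paper's: build the integer matrix $M$ from the address map, show $\eta\in\mathrm{spec}(M)$, then feed in the Lipschitz bound (Theorem~\ref{Del-Lip}) for (ii) and almost-linearity (Theorem~\ref{th-meyer}(iii)) for (iii). The paper argues (iii) by projecting onto a single eigenvector $\bbe_\gamma$ rather than passing to the quotient $\R^s/V$, but these are the same computation; your packaging is arguably cleaner. Two small corrections. First, a $\Z$-generating set of $\Z^s$ need not \emph{contain} a $\Z$-basis (e.g.\ $\{2,3\}\subset\Z$), so the phrase ``contains a $\Z$-basis'' is wrong; what you actually use, and what is true, is that $\phi(X-X)$ spans $\R^s$ over $\R$, which suffices to bound $\|M^k\|$ (resp.\ $\|\overline{M}^k\|$). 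Second, your sketch for (ii$'$) overshoots: ``bounded $\overline{M}^k$-orbits on the quotient'' would yield $|\eta'|\le 1$ for all conjugates, i.e.\ the Pisot/Salem conclusion of (iii), not merely $|\eta'|<\eta$. Repetitivity alone does not produce almost-linearity; the actual argument (which the paper also omits, deferring to \cite{Thur} and \cite{Solnotes}) gives only $o(\eta^k)$ growth in the transverse direction, enough to exclude $|\eta'|=\eta$ but no more.
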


\begin{proof}[partial sketch] (i) We will  prove a more general statement:

\begin{lemma} \label{lem-algeb}
Let $X$ be a finitely generated Delone set in $\R^d$ such that $QX \subseteq X$ for some expanding linear map $Q$. Then all eigenvalues of $Q$ are algebraic integers.
\end{lemma}

\begin{proof}[of the lemma]
Since $X$ is finitely generated, we have $[X]=[\bv_1,\ldots,\bv_s]$ for some free generators and the address map $\phi: [X] \to \Z^s$. Since $Q X\subseteq X$, we also have $Q([X])\subseteq [X]$. It follows that $Q\bv_j$ is an integer linear combination of the vectors $\bv_j$ (recall that these are the free generators of $[X]$). Define the matrix $V = [\bv_1,\ldots,\bv_s]$ of size $d\times s$. We thus obtain an integer matrix $M$ of size $s\times s$ such that 
\be \label{imp}
Q V = VM.
\ee
It is clear that $\{\bv_j\}_{j\le s}$ spans $\R^d$, because $X$ does, hence ${\rm rank}(V) = d$. Let $\bbe$ be a left eigenvector of $Q$ corresponding to an eigenvalue $\lam$. Then $\lam\bbe V = \bbe Q V = \bbe VM$. Notice that $\bbe V$ is not zero, because the rows of $V$ are linearly independent. Thus
$\bbe V$ is an eigenvector for $M$ corresponding to $\lam$. But $M$ is an integer matrix, so all its eigenvalues are algebraic integers.\qed
\end{proof}

(ii)  Let $\gam$ be a conjugate of $\eta$. We continue the argument of the lemma. Since $\eta$ is an eigenvalue of the integer matrix $M$, so is $\gam$. Thus there is an eigenvector $\bbe_\gam\in \R^s$ corresponding to $\gam$. We want to prove that $|\gam|\le \eta$.  Let $\phi$ be the address map as in the proof of the lemma. Then $\{\phi(\bv_j)\}_{j\le s}$ is the canonical basis of $\R^s$, by definition. Since $\bv_j$'s are the generators of $[X]$, we must have that $\phi(X)$ spans $\R^s$. It follows that we can find $\bx_0, \bx \in X$ such that $\phi(\bx-\bx_0)=\phi(\bx)-\phi(\bx_0)$ has a non-zero coefficient corresponding to $\bbe_\gam$ in the canonical eigen(root)vector expansion corresponding to $M$. We have $\eta^n \bx\in X$ for all $n\in \N$, and 
\be \label{nunu}
\phi(\eta^n \bx) = M^n \phi(\bx),
\ee
by the definition of $M$. 
Now, by Theorem~\ref{Del-Lip}, we have, 
$$
\|\phi(\eta^n \bx)-\phi(\eta^n\bx_0)\| \le C_0\|\eta^n\bx-\eta^n\bx_0\| = C_0\eta^n \|\bx-\bx_0\|.
$$
On the other hand, by  (\ref{nunu}) and the choice of $\bx,\bx_0$, 
$$
\|\phi(\eta^n \bx)-\phi(\eta^n\bx_0)\| = \|M^n(\phi(\bx-\bx_0))\|\ge c|\gam|^n,
$$
for some $c>0$, and we can conclude that $|\gam| \le |\eta|$.\qed

(ii$'$) This can derived be similarly to  \cite[\S 10]{Thur}, see also \cite{Solnotes}; we omit the proof.

(iii) Consider the address map $\phi$ and the matrix $M$ as above. Recall that for a Meyer set the address map is almost linear, that is, there exists a linear map $L:\R^d\to \R^s$ such that 
$\|\phi(\bx) - L\bx\|\le C$ for all $\bx\in X$. Consider the range $H:= L(\R^d$), a $d$-dimensional subspace of $\R^s$. 

\medskip

 \noindent {\bf Claim.} {\em The subspace $H$ is invariant under $M$; in fact, $H$ is contained in the eigenspace of $M$ corresponding to $\eta$.}


\begin{proof}[of the claim]
Choose any unit vector $\bw\in H$, then $\bw=L\bz$ for some $\bz\in \R^d$. For any $k>1$ we can find $\bx\in X$ such that $\|k\bz-\bx\|\le R$.
Then $\|k\bw - L\bx\|\le \|L\|R$, hence 
\be \label{tr1}
\|k\bw - \phi(\bx)\|\le C+\|L\|R.
\ee
Since $\eta \bx\in X$, we also have $\|\phi(\eta \bx) - L(\eta \bx)\|\le C$, therefore, 
$$
\|\phi(\eta\bx) - \eta \phi(\bx)\|\le \|\phi(\eta\bx) - L(\eta\bx)\| + \eta\|\phi(\bx) - L\bx\|\le C(1+\eta).
$$
Recall from (\ref{nunu}) that $\phi(\eta \bx) = M\phi(\bx)$, thus $\|(M-\eta I)\phi(\bx)\|\le C(1+\eta)$. Combining this with (\ref{tr1}) yields
$$
\|(M-\eta I) k\bw\|\le C(1+\eta) + \|M-\eta I\|\cdot (C+\|L\|R)=: \wt{C}.
$$
Therefore, $\|(M-\eta I) \bw\|\le \wt{C}/k$ for all $k>1$, and the claim follows.
\end{proof}

Now we repeat part of the argument from (ii): given a conjugate $\gam$ of $\eta$ and the corresponding eigenvector $\bbe_\gam$ for $M$, choose $\bx$ with $\phi(\bx)$ having non-zero coefficient corresponding to  $\bbe_\gam$. By the Claim, $\bbe_\gam\not\in H$. As above, we have $\phi(\eta^n \bx) = M^n \phi(\bx)$, hence 
$$
c|\gam|^n \le \|M^n\phi(\bx)-\eta^nL\bx\| = \|\phi(\eta^n\bx) - L(\eta^n\bx)\|\le C,
$$
for some $c>0$, since $L\bx$ has a zero coefficient corresponding to  $\bbe_\gam$. 
It follows that $|\gam|\le 1$, as desired.\qed 
\end{proof}

\begin{example} \label{example} {
(i) Let $\eta>1$ be an irrational algebraic integer. One can  construct a finitely generated Delone set $X\subseteq \R$, with $\eta X\subseteq X$ as follows. We will have $X\subseteq \Z[\eta]$ (the ring generated by $\Z$ and $\eta$) and
$X=-X$. Start with $X\subset [0,\eta) = \{0,1\}$ and proceed by induction, adding points to $X$ from  $[\eta^k,\eta^{k+1})$, once we did this in $[\eta^{k-1},\eta^k)$. First make sure that
$X\cap [\eta^k,\eta^{k+1}) \supseteq \eta(X\cap [\eta^{k-1},\eta^k))$, and then add more points from $\Z[\eta]$, if necessary, to maintain the relative denseness, but also preserve uniform discreteness. The latter is easy, since $\Z[\eta]$ is dense in $\R$. Finally, observe that $\Z[\eta] = [1,\eta,\ldots,\eta^{s-1}]$ is finitely generated as a $\Z$-module, where $s$ is the degree of $\eta$.

\smallskip

(ii) Let $\eta>1$ be a Perron number. One can construct a Delone set $X\subseteq \R$ with inflation symmetry $\eta$. In fact, it will even be a {\em substitution Delone set}, discussed in the next section. It is obtained as a set of endpoints of a self-similar tiling of $\R$ corresponding to a primitive substitution. By a theorem of D. Lind \cite{Lind}, for any Perron  number $\eta$ there exists a primitive integer matrix $M$ with $\eta$ as a dominant eigenvalue. Moreover, by a minor modification of the construction, one can make sure that the entry $(1,1)$ of the matrix is positive and the first column sum is at least three, see \cite{Solnotes}. Then simply choose any substitution with substitution matrix $M$.

It is known that such an $X$ is Meyer if and only if $\eta$ is  a Pisot number.

\smallskip

(iii) {\bf $\beta$-integers.} 
Fix $\beta>1$, with $\beta\not\in\N$. Let $X_\beta = X_\beta^+\bigcup (-X_\beta^+)$, where
$$
X_\beta^+=\Bigl\{\sum_{j=0}^N a_j \beta^j,\ a_j \in \{0,1,\ldots,\lfloor \beta \rfloor\},\ \mbox{``greedy'' expansion} \Bigr\}.
$$
Then $X_\beta$ is relatively dense in $\R$ and $\beta X\subset X$.

\begin{itemize}
\item $X_\beta$ is Delone if and only if the orbit of 1 under $T_\beta(x) = \beta x$ (mod 1)  does not accumulate to 0.

\item Delone $X_\beta$ is finitely generated iff $\beta$ is an algebraic integer.

\item Delone $X_\beta$ is of finite type iff $\beta$ is a Parry $\beta$-number (see \cite{Parry}), i.e.,\ the orbit $\{T_\beta^n (1)\}_{n\ge 0}$ is finite.

\item If $\beta$ is Pisot (or Salem of degree four \cite{Boyd}), then $X_\beta$ is Meyer.
\end{itemize}

We will now explain the last claim, that Pisot $\beta$ implies the Meyer property, at the same time illustrating some of the concepts in the proofs of the theorem above.

Fix $\beta>1$ algebraic integer, such that $\{T_\beta^n 1\}_{n\ge 0}$ does not accumulate to zero. Then $X_\beta\subset \R$ is Delone and $[X_\beta] = \Z[\beta]$.
Free generators for $[X_\beta]$ can be chosen $v_j = \beta^{j-1}, \ j\le s$, where $s$ is the degree of $\beta$. Let $c_0 + c_1x + \cdots + c_{s-1}x^{s-1} + x^s$ be the minimal integer polynomial for $\beta$.


We have $Qx = \beta x$ on $\R$, and $QX_\beta \subset X_\beta$. 
Then $QV = VM$, where $V = [v_1,\ldots,v_s]$ (a $1\times s$ matrix), and
\begin{equation} \label{eq-matri}
M = \left(\begin{array}{ccccc} 0 & \ldots & \ldots & 0 & -c_0 \\
                                               1 & 0 & \ldots   &      0 & -c_1 \\
                                               0 & 1 & \ldots   &           0 & -c_2 \\
                                               \ldots & \ldots & \ldots  & \ldots & \ldots \\
                                               \ldots & \ldots &  \ldots & 0 & -c_{s-2} \\
                                               0 & 0 & \ldots & 1 & -c_{s-1} \end{array} \right)
\end{equation}
Let $\phi$ be the associated address map, $\phi: [X_\beta]=\Z[\beta]\to \R^s$. We have
$$
\phi(\beta^n) = M^n \phi(1) = M^n \left(\begin{array}{c} 1 \\ 0 \\ \vdots\\  0 \end{array} \right).
$$
Now suppose that $\beta$ is Pisot. Then we have
\begin{equation} \label{Pisot}
\phi(\beta^n) = \beta^n e_\beta + O(\varrho^n),
\end{equation}
where $e_\beta$ is the eigenvector of $M$ corresponding to $\beta$ and $\varrho \in (0,1)$ is the maximal absolute value of the Galois conjugates of $\beta$.
Define $L:\R\to \R^s$ by $L(x) = xe_\beta$, a linear map. 
We want to show that $\|\phi(x) - Lx\|\le C$ on $X_\beta$, whence $X_\beta$ is a Meyer set by Theorem~\ref{th-meyer}.
In view of (\ref{Pisot}), we have for $x=\sum_{j=0}^N a_j \beta^j\in X_\beta^+$:
\begin{eqnarray*}
\|\phi(x) - Lx\| & = & \left\|\phi\Bigl(\sum_{j=0}^N a_j \beta^j\Bigr) - L \Bigl(\sum_{j=0}^N a_j \beta^j\Bigr)\right\| \\
                      & = & O\Bigl( \max_j|a_j|\cdot \sum_{j=0}^N \varrho^j \Bigr) = O(1),
\end{eqnarray*}
as desired.

\smallskip

The same proof works, e.g., for the set of endpoints of a self-similar tiling on $\R$ with a Pisot inflation factor.

\medskip

(iv) {\bf Salem inflation factors.} For every Salem number $\beta$ there exists a Meyer set in $\R$ with inflation $\beta$; see \cite{Meyer72} for the original Meyer's construction. I am grateful to Shigeki Akiyama who showed me the following example, which is, apparently, ``folklore''.

Let $\beta$ be a Salem number of degree $s\ge 4$, and let $p(x) = c_0 + c_1 x + \cdots + c_{s-1} x^{s-1} + x^s$ be the minimal polynomial for $\beta$.
Let $\beta_2,\ldots,\beta_s$ be the Galois conjugates of $\beta$.
It is well-known that $p(x)$ is a reciprocal polynomial, i.e., $c_0=1$ and $c_{s-j} = c_j$ for $j=1,\ldots,s-1$, and the conjugates satisfy $|\beta_2| = \ldots = |\beta_{s-1}|=1$ and $|\beta_s|<1$, see \cite{Salem_book}.
Consider
$$
X_\beta:= \Bigl\{x = \sum_{n=0}^N a_n \beta^n:\ a_n\in \Z,\ \max_{2\le j \le s} \bigl|\sum_{n=0}^N a_n \beta_j^n \bigr| < 1\Bigr\}.
$$
It is immediate that $\beta X_\beta\subset X_\beta$. Moreover, $X_\beta$ is a non-degenerate model set, hence it is Meyer. 

In order to prove the last claim, we consider the matrix $M$ from (\ref{eq-matri}) above and identify $X_\beta$ with $X_\beta e_\beta$, where $e_\beta$ is the right eigenvector of $M$ corresponding to $\beta$. The subspace spanned by $e_\beta$ is our ``physical space''. The ``internal space'' $H$ is the linear span of the other eigenvectors. One can check that (with appropriate normalization) the coordinate $a_k$ of a vector $\by = (y_j)_1^s\in \R^s$ with respect to  the eigenvector
of $M$ corresponding to $\beta_k$ is given by $\sum_{j=0}^{s-1} y_j  \beta_k^j$. Thus, taking $\Z^s$ as a lattice and the window in $H$ given by the condition
$$
\|\by\|:= \max_{2\le k \le s} |a_k|<1\ \ \ \mbox{for}\ \ \ \by = \sum_{k=2}^s a_k e_{\beta_k},
$$
we get the desired representation as a cut-and-project set.
The details are left to the reader. (Instead of the unit ball in the $\ell^\infty$ norm as a window we can choose a different radius and a different norm to get other examples of Meyer sets with the same inflation symmetry.)
}
\end{example}


\section{Substitution Delone sets and substitution tilings}

This section is based on \cite{LaWa} and \cite{LMS03}; see also \cite{Bandt}.

\medskip

If we think about Delone sets as being models of atomic structures, it is natural to add the feature of ``color'' or ``type'' of a point/atom. Thus we are going to talk about ``$m$-sets''. (Sometimes, the term ``multiset'' is used, but it usually refers to a set with multiplicities, and we want to avoid this.)

\subsection{Substitution Delone $m$-sets}

\begin{definition}
An   $m$-multiset in $\R^d$ is a 
subset $\Lb = \Lam_1 \times \dots \times \Lam_m 
\subset \R^d \times \dots \times \R^d$ \; ($m$ copies)
where $\Lam_i \subset \R^d$. We also write 
$\Lb = (\Lam_1, \dots, \Lam_m) = (\Lam_i)_{i\le m}$.
We say that $\Lb=(\Lambda_i)_{i\le m}$ is a {\em Delone $m$-set} in $\R^d$ if
each $\Lambda_i$ is Delone and $\supp(\Lb):=\bigcup_{i=1}^m \Lambda_i 
\subset \R^d$ is Delone.
\end{definition}

Although $\Lb$ is a product of sets, it is convenient to think
of it as a set with types or colors, $i$ being the
color of points in $\Lambda_i$.  (However, we do not assume that $\Lam_i$ are pairwise disjoint!)
A {\em cluster} of $\Lb$ is, by definition,
a family $\Pb = (P_i)_{i\le m}$ where $P_i \subset \Lambda_i$ is 
finite for all $i\le m$. For a bounded set $A \subset \R^d$, let
$A \cap \Lb := (A \cap \Lam_i)_{i \le m}$. 
There is a natural translation $\R^d$-action on the set of Delone $m$-sets and their clusters
in $\R^d$. The translate of a cluster $\Pb$ by $x \in \R^d$ is $x + \Pb = (x+P_i)_{i \le m}$.
We say that two clusters $\Pb$ and $\Pb'$ are translationally equivalent if $\Pb = x + \Pb'$, i.e. 
$P_i = x + P_i'$ for all $i \le m$, for some $x \in \R^d$. 

Recall that  linear map $Q : \R^d \rightarrow \R^d$ is {\em expanding}
if its every eigenvalue lies outside the unit circle. 

\begin{definition} \label{def-subst-mul}
{ $\Lb = (\Lam_i)_{i\le m}$ is called a {\em
substitution Delone $m$-set} if $\Lb$ is a Delone $m$-set and
there exist an expanding map
$Q:\, \R^d\to \R^d$ and finite sets $\Dk_{ij}$ for $i,j\le m$ (possibly empty) such that
\be \label{eq-sub}
\Lambda_i = \biguplus_{j=1}^m (Q \Lambda_j + \Dk_{ij}),\ \ \ i \le m,
\ee
where $\biguplus$ denotes disjoint union.  The {\em substitution matrix}  $\Sf$ is defined by $\Sf_{ij} = \sharp(\Dk_{ij})$. The substitution $m$-set is {\em primitive} if $\Sf$ is primitive, i.e., some power of $\Sf$ has only strictly positive entries.}

With an abuse of terminology, we say that $\supp(\Lb)$, or sometimes even $\Lb$, is simply a substitution Delone set.
\end{definition}

There is a connection between substitution Delone sets and Delone sets with inflation symmetries, discussed in Section 3. Of course, here $Q$ is more general, whereas in Section 3 it was a homothety $\bx\mapsto \eta\bx$. It is not always true that $\supp(\Lb) \supset Q (\supp(\Lb))$; a sufficient condition is that for every $j\le m$ there exists $i\le m$ such that $\Dk_{ij} \ni 0$.
This may be achieved by passing from $\Lam_i$ to $\Lam_i + x_i$, which satisfy a system of equations as in (\ref{eq-sub}), with modified $\Dk_{ij}$; the only issue is whether the new
$m$-set is still a Delone $m$-set. In any case, the following holds:

\begin{lemma} \label{lem-alg} Suppose that $\Lb$ is a substitution Delone $m$-set with expansion map $Q$. If $\supp(\Lb)$ is finitely generated, then all eigenvalues of $Q$ are algebraic integers.
\end{lemma}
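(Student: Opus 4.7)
My plan is to reduce the statement to the already-proven Lemma~\ref{lem-algeb}. The main obstacle is that the substitution equation $\Lambda_i = \biguplus_j (Q\Lambda_j + \Dk_{ij})$ does not by itself give an inclusion $QY \subseteq Y$ for $Y = \supp(\Lb)$: each $Q\Lambda_j$ appears only up to translation by elements of $\Dk_{ij}$. The idea is to pass to differences and to a suitable power $Q^{n_0}$, producing a finitely generated $\Z$-submodule of $\R^d$ of full linear rank which is stabilized by $Q^{n_0}$.

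First I would iterate: unwinding the substitution $n$ times gives
$$
\Lambda_i = \biguplus_{k=1}^m (Q^n \Lambda_k + \Dk_{ik}^{(n)}),\qquad \#\Dk_{ik}^{(n)} = (\Sf^n)_{ik}.
$$
Because each $\Lambda_i$ is Delone, hence nonempty, no power $\Sf^n$ can vanish, so the directed graph on $\{1,\ldots,m\}$ with an edge $j\to i$ whenever $\Dk_{ij}\ne \es$ must contain a cycle. Equivalently, there exist $i_0\in\{1,\ldots,m\}$ and $n_0\ge 1$ with $\Dk_{i_0 i_0}^{(n_0)}\ne\es$. Fixing any $d\in \Dk_{i_0 i_0}^{(n_0)}$, the relation $Q^{n_0}\by + d \in \Lambda_{i_0}$, valid for every $\by\in\Lambda_{i_0}$, yields by subtraction $Q^{n_0}(\by - \by') \in \Lambda_{i_0} - \Lambda_{i_0}$ for all $\by,\by'\in\Lambda_{i_0}$, so $Q^{n_0}$ stabilizes the $\Z$-module $G := [\Lambda_{i_0} - \Lambda_{i_0}]$.

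It then remains to argue as in Lemma~\ref{lem-algeb}. The module $G$ is finitely generated, being a subgroup of the finitely generated abelian group $[\supp(\Lb)]$, and it spans $\R^d$ as a vector space since $\Lambda_{i_0}$, being Delone, contains $d+1$ affinely independent points. Choosing a free basis $G = [\bv_1,\ldots,\bv_s]$ and setting $V = [\bv_1,\ldots,\bv_s]$ (a $d\times s$ matrix of rank $d$), the inclusion $Q^{n_0}G \subseteq G$ forces $Q^{n_0} V = VM$ for some integer $s\times s$ matrix $M$; the left-eigenvector argument of Lemma~\ref{lem-algeb} then shows that every eigenvalue of $Q^{n_0}$ is an eigenvalue of $M$, hence an algebraic integer. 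Finally, any eigenvalue $\lam$ of $Q$ satisfies $\lam^{n_0}\in\overline{\Z}$, so $\lam$, being a root of the monic polynomial $x^{n_0}-\lam^{n_0}$ over the ring $\overline{\Z}$ of algebraic integers, is itself an algebraic integer. The only genuine subtlety is the initial iteration step, but this is forced by non-emptiness of the $\Lambda_i$ and requires no primitivity assumption on $\Sf$.
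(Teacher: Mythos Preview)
Your proof is correct and follows essentially the paper's route: pass to inter-atomic differences to obtain a finitely generated $\Z$-module of full linear rank stabilized by the expansion, then repeat the eigenvector argument of Lemma~\ref{lem-algeb}. The only variation is that the paper works directly with $Q$ on $\Xi(\Lb)=\bigcup_{i}(\Lambda_i-\Lambda_i)$, observing $Q\,\Xi(\Lb)\subset\Xi(\Lb)$, whereas you restrict to a single color $\Lambda_{i_0}-\Lambda_{i_0}$ at the cost of passing to a power $Q^{n_0}$; this buys you robustness in the degenerate case where some column of $\Sf$ vanishes, a point the paper's one-line hint does not address.
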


For the proof, it is convenient to consider the set of ``inter-atomic vectors''
\be \label{def-Xi}
\Xi(\Lb):= \bigcup_{i=1}^m (\Lam_i - \Lam_i).
\ee
It is immediate from (\ref{eq-sub}) that $Q(\Xi(\Lb))\subset \Xi(\Lb)$. Even though it need not be a Delone set (it is Delone when $\supp(\Lb)$ is a Meyer set), the proof proceeds similarly to the
proof of Lemma~\ref{lem-algeb}.

\medskip

There is another important necessary condition for $Q$ to be an expansion map.

\begin{theorem}[{\cite[Th.\,2.3]{LaWa}}] \label{th-PF}
If $\Lb$ is a primitive substitution Delone $m$-set with expansion map $Q$, then the Perron-Frobenius (PF) eigenvalue $\lam(\Sf)$ of the substitution matrix $\Sf$ equals $|\det(Q)|$.
\end{theorem}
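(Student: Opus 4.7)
The plan is to compare two independent estimates for the growth rate of $\#(\Lam_i \cap Q^n A)$, where $A$ is a fixed large ball: one estimate coming from the Delone (uniform density) property and yielding growth like $|\det Q|^n$, the other coming from iterating the substitution and yielding growth like $\lam(\Sf)^n$ via Perron--Frobenius.

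More concretely, I would fix $A = B(\bo, R_0)$ with $R_0$ large enough that $\Lam_i \cap A \ne \emptyset$ for every $i\le m$, and set $N_i(n) := \#(\Lam_i \cap Q^n A)$ and $\mathbf{N}(n) = (N_i(n))_{i\le m}$. Since $\Lam_i$ is uniformly discrete and relatively dense, and $\Vol(Q^n A) = |\det Q|^n \Vol(A)$, standard packing/covering estimates give $c_1 |\det Q|^n \le N_i(n) \le c_2|\det Q|^n$ for all large $n$, with constants $c_1,c_2>0$ depending on the Delone parameters of $\Lam_i$.

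Next I would use the substitution rule $\Lam_i = \biguplus_j (Q\Lam_j + \Dk_{ij})$ to obtain
$$
N_i(n) = \sum_{j=1}^m \sum_{d \in \Dk_{ij}} \#\bigl(\Lam_j \cap (Q^{n-1}A - Q^{-1}d)\bigr).
$$
Since each $\Dk_{ij}$ is finite, the shift vectors $Q^{-1}d$ are bounded by some $K$. Hence each inner cardinality equals $N_j(n-1)$ up to an error bounded by the number of points of $\Lam_j$ in a $K$-neighborhood of $\partial Q^{n-1}A$. Because $Q$ is expanding (all eigenvalues strictly outside the unit circle), $Q^{n-1}A$ is a F\o{}lner-type sequence whose boundary $K$-neighborhood has volume $o(|\det Q|^{n-1})$, so the aggregated error satisfies $\|\mathbf{E}(n)\| = o(|\det Q|^{n-1})$. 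This yields the vector recursion
$$
\mathbf{N}(n) = \Sf\, \mathbf{N}(n-1) + \mathbf{E}(n).
$$

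Finally I would run a Perron--Frobenius dichotomy. Iterating gives $\mathbf{N}(n) = \Sf^n \mathbf{N}(0) + \sum_{k=0}^{n-1} \Sf^k \mathbf{E}(n-k)$. Primitivity of $\Sf$ and strict positivity of $\mathbf{N}(0)$ ensure $\|\Sf^n \mathbf{N}(0)\| \asymp \lam(\Sf)^n$. If $\lam(\Sf) > |\det Q|$, the main term dominates the error sum and forces $\|\mathbf{N}(n)\| \gtrsim \lam(\Sf)^n$, contradicting the upper bound $N_i(n) \le c_2|\det Q|^n$. If $\lam(\Sf) < |\det Q|$, a geometric-series estimate shows both the main term and the error sum are $o(|\det Q|^n)$, contradicting the lower bound $N_i(n) \ge c_1|\det Q|^n$. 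Hence $\lam(\Sf)=|\det Q|$.

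The part I expect to be the main obstacle is the error control in the recursion step: one needs the F\o{}lner property for the ellipsoidal sequence $Q^n A$ (which is where expansion of $Q$ is essential), together with a uniform bound $\#(\Lam_j \cap U) = O(\Vol(U))$ from uniform discreteness, to ensure the errors do not overwhelm whichever of $\lam(\Sf)^n$ or $|\det Q|^n$ is supposed to dominate. Once that estimate is in place, the Perron--Frobenius comparison is essentially mechanical.
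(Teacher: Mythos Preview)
Your approach is correct and is essentially the same density-counting argument the paper sketches: the paper does not prove the theorem but records precisely your dichotomy as the pair of implications ``$\supp(\Lb)$ relatively dense $\Rightarrow \lam(\Sf)\ge |\det Q|$'' and ``$\supp(\Lb)$ uniformly discrete $\Rightarrow \lam(\Sf)\le |\det Q|$'' (the second $\ge$ in the printed text is evidently a typo for $\le$), referring to Lagarias--Wang for details. Your recursion $\mathbf{N}(n)=\Sf\,\mathbf{N}(n-1)+\mathbf{E}(n)$ together with the F\o{}lner estimate for $Q^nA$ is exactly how one makes those two implications precise.
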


In fact, it is not difficult to see that 
$$
\supp(\Lb)\ \ \mbox{is relatively dense}\ \Rightarrow\ \lam(\Sf) \ge |\det(Q)|;
$$
$$
\supp(\Lb)\ \ \mbox{is uniformly discrete}\ \Rightarrow\ \lam(\Sf) \ge |\det(Q)|.
$$

For each primitive substitution Delone $m$-set $\Lb$ (\ref{eq-sub}) one can set up  
an {\em adjoint system} of equations
\be \label{eq-til}
Q A_j = \bigcup_{i=1}^m (\Dk_{ij} + A_i),\ \ \ j \le m.
\ee
From the theory of 
graph-directed iterated function systems,
it follows that (\ref{eq-til}) always has a unique solution 
for which $\Ak = \{A_1, \dots, A_m\}$ is 
a family of non-empty compact sets of $\R^d$. 
It is proved in \cite[Th.\,2.4 and Th.\,5.5]{LaWa} that if $\Lb$ is a primitive
substitution Delone $m$-set, then all the sets $A_i$ from (\ref{eq-til})
have non-empty interiors and, moreover, each $A_i$ is the closure of 
its interior. From Theorem~\ref{th-PF} it follows that the interiors of the sets in the right-hand side of (\ref{eq-til}) are disjoint, hence we have a natural candidate for a tiling. We next review briefly the relevant tiling definitions.


\subsection{Tilings and substitution tilings}
This section has a significant overlap with \cite{FrankLN}.

We begin with a set of types (or colors) $\{1,\ldots,m\}$, 
which we fix once and for all. 
A {\em tile} in $\R^d$ is defined as a pair $T=(A,i)$ where $A=\supp(T)$ 
(the support of $T$) is a compact
set in $\R^d$ which is the closure of its interior, and 
$i=\ell(T)\in \{1,\ldots,m\}$
is the type of $T$. We let $g+T = (g+A,i)$ for $g\in \R^d$. Given a tile $T$ and a set $X\subseteq \R^d$ we use the notation:
$$
T + X = \{T+x:\ x\in X\}.
$$
A finite set $P$ of tiles is called a {\em patch} if the tiles of $P$ have mutually disjoint
interiors (strictly speaking, we have to say ``supports of tiles,'' but this
abuse of language should not lead to confusion). 
A {\em tiling} of $\R^d$ is a set $\Tk$ of tiles such that 
$\R^d = \bigcup \{\supp(T) : T \in \Tk\}$ and distinct tiles have disjoint 
interiors.
Given a tiling $\Tk$, finite sets of tiles of $\Tk$ are called 
$\Tk$-patches.

We always assume that any two $\Tk$-tiles with the same color are translationally equivalent.
(Hence there are finitely many $\Tk$-tiles up to translation.)

\begin{definition}\label{def-subst}
{ Let $\Ak = \{T_1,\ldots,T_m\}$ be a finite set of tiles in $\R^d$
such that $T_i=(A_i,i)$; we will call them {\em prototiles}.
Denote by $\Pk_{\Ak}$ the set of
patches made of tiles each of which is a translate of one of $T_i$'s.
We say that $\omega: \Ak \to \Pk_{\Ak}$ is a {\em tile-substitution} (or simply
{\em substitution}) with
expanding map $Q$ if there exist finite sets $\Dk_{ij}\subset \R^d$ for
$i,j \le m$, such that
\begin{equation}
\om(T_j)= \bigcup_{i=1}^m (T_i + \Dk_{ij}).
\label{subdiv}
\end{equation}
Since $\om(T_j)$ is a patch, it follows  that for all $j\le m$,
$$
Q A_j  = \bigcup_{i=1}^m (A_i+\Dk_{ij}),
$$
and the sets in the right-hand side have disjoint interiors.}
\end{definition}

The substitution (\ref{subdiv}) is extended to all translates of prototiles by
$\om(x+T_j)= Q x + \om(T_j)$, and to patches and tilings by
$\om(P)=\bigcup\{\om(T):\ T\in P\}$.
The substitution $\om$ can be iterated, producing larger and larger patches
$\om^k(T_j)$. As for a substitution Delone $m$-set,  we associate to $\om$ its $m \times m$ 
substitution matrix $\Sf$, with $\Sf_{ij}:=\sharp (\Dk_{ij})$.
The substitution $\om$ is  said to be primitive
if $\Sf$ is primitive.
The tiling $\Tk$ is called a fixed point of a substitution if $\om(\Tk) = \Tk$.

\begin{definition}{
A tiling is called {\em self-affine} if it is a fixed point of a primitive tile-substitution. Usually it is also assumed that the tiling has finite local complexity (FLC). If the expansion map $Q$ is a similitude, that is, $Q=\eta \Ok$ for some $\eta>1$ and an orthogonal linear transformation $\Ok$, then we say that the tiling is {\em self-similar}. For a self-similar tiling in $\R^2$ one also considers the {\em complex expansion factor} $\lam\in \C$,  $|\lam|>1$, by identifying the plane with $\C$ and the map $Q$ with $z\mapsto \lam z$.
}
\end{definition}

Notice that a fixed point of a substitution naturally defines a substitution Delone $m$-set, as follows: By definition, we can write $\Tk = \bigcup_{j=1}^m (T_j + \Lam_j)$ for some Delone sets $\Lam_j$.
Then we have
\begin{eqnarray*}
\bigcup_{i=1}^m (T_i + \Lam_i ) = \Tk = \om(\Tk) & = & \bigcup_{j=1}^m\bigl(\om(T_j) + Q\Lam_j\bigr) \\
& = & \bigcup_{j=1}^m \Bigl(\bigcup_{i=1}^m (T_i + \Dk_{ij}) + Q\Lam_j\Bigr)\\
& = & \bigcup_{i=1}^m \Bigl(T_i + \bigcup_{j=1}^m (Q\Lam_j + \Dk_{ij}) \Bigr).
\end{eqnarray*}
It follows that $\Lb = (\Lam_i)_{i=1}^m$ satisfies the system of equations (\ref{eq-sub}). In general, it is not necessarily true that $\Lam_i$ are disjoint, but we can ensure this, e.g., by taking
$T_j:= \wt{T}_j - c(\wt{T}_j)$, where $\wt{T}_j$ is a $\Tk$-tile of type $j$ and $c(\wt{T}_j)$ is a point chosen in its interior. Then we obtain that $\Lb$ is a substitution Delone $m$-set.

A natural question is when this procedure can be reversed.

\subsection{Representable Delone $m$-sets}

\begin{definition} { A Delone $m$-set $\Lb = (\Lam_i)_{i \le m}$ is called
{\em representable} (by tiles) for a tiling if there exists a set of prototiles 
$\Ak = \{T_i : i\le m\} $
so that
\be\label{eq-1}
\Lb + \Ak := \{x + T_i :\ x\in \Lambda_i,\ i \le m\} \ \ \ \mbox{is a tiling of}\ \ 
\R^d,
\ee
that is, $\R^d = \bigcup_{i\le m} \bigcup_{x\in \Lambda_i} (x + A_i)$ where $T_i = (A_i,i)$ for 
$i \le m$, and the sets in this union have disjoint interiors. 
In the case that $\Lb$ is a primitive substitution Delone $m$-set we will 
understand the term representable to mean relative to the tiles 
$T_i = (A_i,i)$, for $i\le m$, arising from the solution to the adjoint system (\ref{eq-til}). 
We call $\Lb + \Ak$ the associated tiling of $\Lb$.}
\end{definition}

\begin{definition}{
For a substitution Delone $m$-set $\Lb = (\Lambda_i)_{i \le m}$ satisfying (\ref{eq-sub}), define a matrix $\Phi=(\Phi_{ij})_{i,j=1}^m$ 
whose entries are finite (possibly empty) families of linear affine transformations on 
$\R^d$ given by
$$
\Phi_{ij} = \{f: x\mapsto Qx+a:\ a\in \Dk_{ij}\}\,.
$$
We define $\Phi_{ij}(\Xk) := \bigcup_{f\in \Phi_{ij}} f(\Xk)$ for a set $\Xk\subset \R^d$. For an $m$-set $(\Xk_i)_{i\le m}$ let
$$
\Phi\bigl((\Xk_i)_{i\le m}\bigr) = \Bigl(\bigcup_{j=1}^m \Phi_{ij}(\Xk_j)\Bigr)_{i\le m}\,.
$$
Thus $\Phi(\Lb)= \Lb$ by definition. We say that $\Phi$ is an {\em $m$-set substitution}.
}
\end{definition}

Let $\Lb$ be a substitution Delone $m$-set and $\Phi$ the associated $m$-set substitution.

\begin{definition}
{ Let $\Lb$ be a primitive substitution Delone $m$-set and let
$\Pb$ be a cluster of $\Lb$. The
cluster $\Pb$ will be called {\em legal} if it is a translate of a subcluster of
$\Phi^k(\{x_j\})$ for some $x_j \in \Lam_j$, $j \le m$ and $k \in \N$. (Here $\{x_j\}$ is an $m$-set which is empty in all coordinates other than $j$, for which it is a singleton.)
}
\end{definition}

\begin{lemma}\cite{LMS03}
Let $\Lb$ be a primitive substitution Delone $m$-set such that every 
$\Lb$-cluster is legal. Then $\Lb$ is repetitive. 
\end{lemma}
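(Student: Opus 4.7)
The plan is to exploit primitivity to show that, for a uniform depth $n=n(T)$, every super-cluster $\Phi^n(\{y\})$ ($y\in\supp(\Lb)$) contains a translate of every $\Lb$-cluster of radius $\le T$. The partition of $\Lb$ into super-clusters, together with the Delone property, then delivers the repetitivity constant $M_{\Lb}(T)$.

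Fix $T>0$. For each $\Lb$-cluster $\Pb$ of radius $\le T$, legality yields $k(\Pb)\in\N$, $j(\Pb)\le m$ and $x_{j(\Pb)}\in\Lam_{j(\Pb)}$ such that $\Pb$ is (up to translation) a subcluster of $\Phi^{k(\Pb)}(\{x_{j(\Pb)}\})$. The crucial preliminary step, which I expect to be the main obstacle, is to bound $k(\Pb)$ uniformly, i.e.\ to show $k(T):=\sup\{k(\Pb):\Pb\text{ has radius}\le T\}<\infty$. This is essentially finite local complexity at scale $T$, to be established from the disjoint decomposition $\Lam_i=\biguplus_j(Q\Lam_j+\Dk_{ij})$: each $y\in\supp(\Lb)$ acquires a unique level-$n$ ancestor $w\in\supp(\Lb)$ for every $n$, so the local view of $y$ at scale $T$ is determined by the type of $w$, the offset of $y$ inside $\supp(\Phi^n(\{w\}))$, and the immediate ancestor-neighborhood of $w$; primitivity forces these data to stabilize at a depth depending only on $T$.

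Granted $k(T)<\infty$, choose $N_0$ with $\Sf^{N_0}$ strictly positive. For any $y\in\supp(\Lb)$ and any $\Pb$ as above, $\Phi^{N_0}(\{y\})$ contains some $z\in\Lam_{j(\Pb)}$; by translation-equivariance of $\Phi$, $\Phi^{k(\Pb)}(\{z\})=\Phi^{k(\Pb)}(\{x_{j(\Pb)}\})+Q^{k(\Pb)}(z-x_{j(\Pb)})$, which is a translate of $\Phi^{k(\Pb)}(\{x_{j(\Pb)}\})$ and hence contains a translate of $\Pb$. Since $\Phi^{k(\Pb)}(\{z\})\subseteq\Phi^{k(\Pb)+N_0}(\{y\})\subseteq\Phi^{n}(\{y\})$ for $n:=k(T)+N_0$, the super-cluster $\Phi^n(\{y\})$ contains a translate of every $\Lb$-cluster of radius $\le T$, uniformly in $y\in\supp(\Lb)$.

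To finish, let $D_n$ be a uniform upper bound on the diameter of $\supp(\Phi^n(\{y\}))$, readable off from $Q$ and the finite sets $\Dk_{ij}$. Given $z_0\in\R^d$, the Delone property with covering radius $R$ furnishes $y\in\supp(\Lb)$ with $\|z_0-y\|\le R$; the super-cluster $\Phi^n(\{w\})$ containing $y$ (via its unique level-$n$ ancestor $w$) lies in $B(y,D_n)\subseteq B(z_0,R+D_n)$ and contains a translate of every $\Lb$-cluster of radius $\le T$. Thus $M_{\Lb}(T):=R+D_n$ witnesses repetitivity, the hard part being the uniform depth bound of paragraph 2.
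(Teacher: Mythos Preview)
The paper does not prove this lemma at all; it merely states it and cites \cite{LMS03}. So there is no argument in the present text to compare against, and what follows is an assessment of your proposal on its own merits.

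Your overall architecture is the standard one and is sound: legality places each cluster inside some $\Phi^{k}(\{x_j\})$; primitivity then propagates a translate of that super-cluster into $\Phi^{k+N_0}(\{y\})$ for \emph{every} $y\in\supp(\Lb)$; finally, relative density of $\supp(\Lb)$ together with a diameter bound on level-$n$ super-clusters yields $M_{\Lb}(T)$. Two issues, one minor and one substantive.

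\textbf{Minor.} The inclusion $\Phi^{k(\Pb)+N_0}(\{y\})\subseteq\Phi^{n}(\{y\})$ for $n=k(T)+N_0$ is not automatic: $\Phi$ is not monotone in the exponent unless $0\in\Dk_{jj}$ for all $j$. The fix is easy---use that $\Sf^{N}>0$ for every $N\ge N_0$ (a standard consequence of primitivity), and apply the argument with $N=n-k(\Pb)\ge N_0$ in place of $N_0$.

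\textbf{Substantive.} The step you flag as ``the hard part''---the uniform depth bound $k(T)<\infty$---is genuinely not established by your sketch. Your proposed mechanism (``the local view of $y$ at scale $T$ is determined by the type of the level-$n$ ancestor $w$, the offset of $y$ in $\Phi^n(\{w\})$, and the immediate ancestor-neighborhood of $w$; primitivity forces these data to stabilize'') does not work as written. The difficulty is that $B(y,T)\cap\Lb$ can contain points from \emph{many} level-$n$ super-clusters, whose supports may be intertwined in $\R^d$; there is no tile structure here to separate them geometrically. The phrase ``ancestor-neighborhood of $w$'' is itself a $\Lb$-cluster, so you have merely pushed the problem to a different scale, and ``primitivity forces stabilization'' is not an argument. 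In effect you are asserting FLC at scale $T$, which is \emph{not} among the hypotheses and is precisely what would make $k(T)<\infty$ immediate (finitely many cluster types of radius $\le T$, each with its own $k(\Pb)$, take the max). If one works with the weaker, per-cluster notion of repetitivity (for each cluster $\Pb$ there is $M(\Pb)$), the uniform bound is unnecessary and your argument goes through cleanly; with the paper's uniform-in-$T$ definition, this step needs a real proof, for which you should consult \cite{LMS03} directly.
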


Not every substitution Delone $m$-set is representable (see \cite[Ex.\,3.12]{LMS03}), but the following theorem provides the sufficient condition for it.

\begin{theorem}\cite{LMS03}\label{legal-rep}
Let $\Lb$ be a repetitive primitive substitution Delone $m$-set. Then every 
$\Lb$-cluster is legal if and only if $\Lb$ is representable.
\end{theorem}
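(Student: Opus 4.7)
The plan is to treat both directions separately, with the main technical content in the implication ``every cluster legal $\Rightarrow$ representable''. For the easy direction, I would observe that $\mathcal{T} := \Lb + \Ak$ is a fixed point of the tile substitution $\omega$: combining (\ref{eq-sub}) with the adjoint equation (\ref{eq-til}) gives $\omega(\mathcal{T}) = \mathcal{T}$ directly. Then, since $Q$ is expanding and each $A_j$ has nonempty interior, the super-tiles $\omega^k(T_j)$ eventually contain balls of arbitrary radius; combined with repetitivity of $\mathcal{T}$ (inherited from $\Lb$), every $\mathcal{T}$-patch appears, up to translation, as a sub-patch of some $\omega^k(T_j)$, which translates into legality of every $\Lb$-cluster.

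For the hard direction, the key preliminary is that $\Phi^n(\{\xi\}) \subseteq \Lb$ for every $\xi \in \Lam_j$ and $n \ge 0$ (iterating $\Phi(\Lb) = \Lb$), so the patch $\omega^n(\xi + T_j) = \Phi^n(\{\xi\}) + \Ak$ is a sub-patch of the candidate $\Lb + \Ak$, with pairwise disjoint tile interiors by induction on (\ref{eq-til}). Disjoint interiors in $\Lb + \Ak$ follow immediately: any two distinct labeled points in $\Lb$ form a legal $2$-cluster whose translate is a sub-cluster of some $\Phi^k(\{\xi\})$, forcing the corresponding two tiles to appear as distinct tiles of the patch $\omega^k(\xi + T_j)$ and hence have disjoint interiors.

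The main obstacle is the covering $U := \bigcup_{i,\, x \in \Lam_i}(x + A_i) = \R^d$, for which my plan is a Perron--Frobenius density computation followed by a repetitivity argument. Let $d_i$ denote the point density of $\Lam_i$; by (\ref{eq-sub}) and Theorem~\ref{th-PF}, $(d_i)$ is a right Perron--Frobenius eigenvector of $\Sf$ with eigenvalue $|\det Q|$, while $(|A_i|)$ is a left eigenvector for the same eigenvalue by (\ref{eq-til}). Disjointness gives $\sum_i d_i |A_i| \le 1$; for the reverse, the sub-patch $\omega^n(\xi + T_j) \subseteq \Lb + \Ak$ packs $\Sf^n_{ij}$ tiles of type $i$ inside a region of volume $|\det Q|^n |A_j|$, so the Perron--Frobenius asymptotic $\Sf^n_{ij} \sim d_i |A_j| |\det Q|^n /\sum_k d_k |A_k|$ combined with the ambient-density bound $\Sf^n_{ij} \le (d_i + o(1))|\det Q|^n |A_j|$ forces $\sum_k d_k |A_k| \ge 1$. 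Hence $\sum_i d_i |A_i| = 1$, so $V := \R^d \setminus U$ is open with Lebesgue density zero.

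Finally, if $V$ contained a ball $B(p, \rho)$, then since $U \cap B(p, \rho)$ is locally determined by $\Lb \cap B(p, \rho + D)$ (with $D = \max_i \diam A_i$), repetitivity of $\Lb$ would yield a relatively dense family of translated balls $B(t, \rho) \subseteq V$, contradicting density zero. The crux is thus the density identity $\sum_i d_i |A_i| = 1$, whose lower bound genuinely exploits the legality-guaranteed sub-patches $\omega^n(\xi + T_j)$ together with Perron--Frobenius convergence.
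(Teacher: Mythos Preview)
The paper does not supply a proof of this theorem; it is quoted from \cite{LMS03} and followed only by a remark on generating $m$-sets. There is therefore no argument in the present paper to compare your proposal against.

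On its own merits your outline is sound and is close in spirit to the original proof in \cite{LMS03}. The easy direction is correct as stated. For the hard direction, the disjoint-interiors step via legality of two-point clusters is clean and is exactly how \cite{LMS03} handles it. Your covering argument via the density identity $\sum_i d_i\,|A_i| = 1$ is a legitimate route; the Perron--Frobenius asymptotic you invoke is correct once $(d_i)$ and $(|A_i|)$ are identified as right and left eigenvectors for the eigenvalue $|\det Q|$. Two points deserve care. First, your ``ambient-density bound'' $\Sf^n_{ij} \le (d_i + o(1))|\det Q|^n |A_j|$ presupposes that the uniform point density $d_i$ of $\Lam_i$ exists along the van Hove sequence $\{Q^n(\xi + A_j)\}_n$; this holds for primitive substitution Delone $m$-sets, but it is a nontrivial input (essentially the UCF property) and should be cited or justified rather than assumed. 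Second, in the final step you should note explicitly that $U$ is closed (local finiteness of the Delone set plus compactness of the $A_i$), so that a nonempty complement $V$ genuinely contains a ball before the repetitivity contradiction is invoked.
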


\begin{remark} {
In \cite[Lemma 3.2]{LaWa} it is shown that if $\Lb$ is a substitution Delone $m$-set,
then there is a finite $m$-set (cluster) $\Pb \subset \Lb$ for which
$\Phi^{n-1}(\Pb) \subset \Phi^n(\Pb)$ for $n \ge 1$ and 
$\Lb = \lim_{n \to \infty} \Phi^n (\Pb)$. We call such a $m$-set $\Pb$ 
a {\em generating $m$-set}.
Note that, in order to check that every $\Lb$-cluster is legal, 
we only need to see if some cluster that contains a finite generating 
$m$-set for $\Lb$ is legal.}  
\end{remark}

\subsection{Characterization of expansion maps} An important question, first raised by Thurston \cite{Thur}, is to
characterize which expanding linear maps may occur as expansion
maps for self-affine (self-similar) tilings.
It is pointed out in
\cite{Thur} that in one dimension, $\eta>1$ is an expansion factor
if and only if it is a Perron number (necessity follows from the
Perron-Frobenius theorem and sufficiency follows from a result of
Lind \cite{Lind} as in Example \ref{example}(ii)). In two dimensions, Thurston \cite{Thur} proved
that if $\lam$ is a  complex expansion factor of a self-similar
tiling, then $\lam$ is a {\em complex Perron number}, that is, an
algebraic integer whose Galois conjugates, other than $\ov{\lam}$,
are all less than $|\lam|$ in modulus.

The following theorem was
stated in \cite{Ken.thesis}, but complete proof was not available
until much later.

\begin{theorem} \cite{Ken.thesis,KS} \label{th-KS}
Let $\phi$ be a diagonalizable (over $\Compl$) expansion map on
$\R^d$, and let $\Tk$ be a self-affine tiling of $\R^d$ with
expansion $\phi$. Then

{\rm (i)} every eigenvalue of $\phi$ is an algebraic integer;

{\rm (ii)}
 if $\lam$ is an eigenvalue of $\phi$ of multiplicity $k$ and $\gam$ is
an algebraic conjugate of $\lam$, then either $|\gam| < |\lam|$, or $\gam$
is also an eigenvalue of $\phi$ of multiplicity greater or equal to $k$.
\end{theorem}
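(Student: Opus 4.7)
The overall strategy is to translate the tiling hypothesis into the substitution Delone $m$-set language developed earlier in the paper. From the self-affine tiling $\Tk$ we extract a primitive substitution Delone $m$-set $\Lb = (\Lam_i)_{i\le m}$ by choosing a control point in the interior of each prototile and translating consistently, so that the expansion map $Q$ of $\Lb$ coincides with $\phi$. Since $\Tk$ has FLC, so does $X := \supp(\Lb)$, and Theorem~\ref{Del-Lip} ensures that $X$ is finitely generated. Fix a $\Z$-basis $[X] = [\bv_1,\ldots,\bv_s]$ with $s \ge d$, form the $d\times s$ matrix $V$ whose columns are the $\bv_j$, and let $\phi_{\rm addr} : [X] \to \Z^s$ be the associated address map. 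Exactly as in the proof of Lemma~\ref{lem-algeb}, the inclusion $Q([X]) \subseteq [X]$ produces an integer $s \times s$ matrix $M$ with $Q V = V M$.

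Part (i) is then immediate from Lemma~\ref{lem-alg}: since $V$ has rank $d$, every eigenvalue of $Q = \phi$ occurs as an eigenvalue of the integer matrix $M$, and hence is an algebraic integer.

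For part (ii), view $V$ as a surjective complex-linear map $\Compl^s \to \Compl^d$. The intertwining $VM = QV$ implies that $K := \ker V$ is $M$-invariant of dimension $s-d$, and that $V$ induces an isomorphism $\Compl^s/K \cong \Compl^d$ carrying the induced action of $M$ to $Q$. Consequently the multiset of eigenvalues of $M$ decomposes as $\mathrm{spec}(M) = \mathrm{spec}(Q) \sqcup \mathrm{spec}(M|_K)$. Since $M$ has integer entries, the entire Galois orbit of any eigenvalue $\lam$ of $Q$ lies in $\mathrm{spec}(M)$; a conjugate $\gam$ of $\lam$ therefore either already appears in $\mathrm{spec}(Q)$ or appears only in $\mathrm{spec}(M|_K)$. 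Statement (ii) becomes equivalent to the following: for any conjugate $\gam$ of $\lam$, the multiplicity of $\gam$ in $\mathrm{spec}(M|_K)$ can be positive only when $|\gam|<|\lam|$, unless $\gam$ already occurs in $\mathrm{spec}(Q)$ with multiplicity $\ge k$.

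The analytic driver for establishing this is the Lipschitz property $\|\phi_{\rm addr}(\bx)-\phi_{\rm addr}(\bx')\| \le C_0 \|\bx-\bx'\|$ on $X$ from Theorem~\ref{Del-Lip}, iterated via the identity $\phi_{\rm addr}(Q\bx) = M\phi_{\rm addr}(\bx)$ together with the substitution relation $Q X \subseteq X - F$ for a finite set $F$. Picking $\bx \in X$ whose component in the generalized $\lam$-eigenspace of $Q$ is nontrivial, the diagonalizability of $Q$ gives $\|Q^n\bx\| \le C_\lam |\lam|^n$ along that eigenspace, and hence $\|M^n\phi_{\rm addr}(\bx)\|$ is controlled by $|\lam|^n$ up to a bounded error accumulated along the orbit. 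Projecting onto the $\gam$-eigenline inside $K$ and comparing growth forces either $|\gam|<|\lam|$ or the presence of additional copies of $\gam$ inside $\mathrm{spec}(Q)$ sufficient to absorb the excess. The main obstacle is precisely this multiplicity bookkeeping: a naive application of the Lipschitz bound only delivers $|\gam|\le \rho(Q)$, and obtaining the sharp, eigenvalue-by-eigenvalue comparison requires running the argument on the $M$-invariant subspace generated by the full Galois orbit of $\lam$ and exploiting the diagonalizability of $Q$ to rule out Jordan-block growth obstructions in $M|_K$.
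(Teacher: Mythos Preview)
Your setup --- extracting control points to get a substitution Delone $m$-set, passing to the address map, and producing the integer matrix $M$ with $QV=VM$ --- is exactly the framework the paper lays out, and part~(i) follows from Lemma~\ref{lem-alg} as you say. (With the paper's control points one even has $QX\subset X$ directly, so the detour through $QX\subset X-F$ is unnecessary.) For part~(ii) the paper itself gives no proof: it sets up control points and the address map and then writes ``See the references for the rest,'' deferring to \cite{Ken.thesis,KS}.

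Your sketch of (ii), however, has a genuine gap at the decisive step. The Lipschitz bound from Theorem~\ref{Del-Lip} controls $\|M^n(\phi_{\rm addr}(\bx)-\phi_{\rm addr}(\bx'))\|$ only by $\|Q^n(\bx-\bx')\|$, which in general grows like the spectral radius $\rho(Q)^n$; you cannot arrange for a point $\bx\in X$ to have its $Q^n$-orbit governed by a single prescribed eigenvalue $\lam$, since $X$ is a fixed Delone set with no reason to meet proper $Q$-invariant subspaces. So the line ``$\|Q^n\bx\|\le C_\lam|\lam|^n$ along that eigenspace, and hence $\|M^n\phi_{\rm addr}(\bx)\|$ is controlled by $|\lam|^n$'' does not follow. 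You yourself note that the naive bound gives only $|\gam|\le\rho(Q)$, but your closing sentence merely names the obstacle (``running the argument on the $M$-invariant subspace generated by the full Galois orbit of $\lam$'') without resolving it: nothing in the sketch actually produces the multiplicity comparison $\mathrm{mult}_Q(\gam)\ge k$, which is the whole content of (ii) beyond the scalar case of Theorem~\ref{th-infl}(ii). The argument in \cite{KS} is substantially harder; it uses repetitivity (coming from primitivity) in an essential way and requires a finer analysis of how the module $[X]$ sits relative to the eigenspaces of $Q$, not just the global Lipschitz estimate.
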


Here part (i) is included for completeness; it is a folklore result, proved similarly to Lemma~\ref{lem-algeb}.
Recently Theorem~\ref{th-KS} was finally extended to the general, not necessarily diagonalizable, case by J. Kwapisz \cite{Kwapisz}; we don't state precise ``generalized Perron'' conditions here, but refer the reader to his paper.
Basically, one has to take into account the multiplicity of Jordan blocks as well. It is conjectured that the necessary condition is also sufficient, at least, in the weaker form: one should be able to construct a self-affine tiling with expansion map $Q^n$ for some $n\in \N$. Sufficiency (in the stronger form) in dimension one follows from \cite{Lind}, as discussed in Section 3, Example (ii), and the construction in the two-dimensional self-similar case is found in \cite{Kenyon.construction}.
A natural approach to the conjecture would be to construct first a substitution Delone $m$-set with the desired inflation symmetry, and then apply  Theorem~\ref{legal-rep}. This way, the geometric shape of tiles comes from the adjoint equation, and one has more freedom.

\medskip

The starting point in the proofs of necessity of the Perron condition is defining the {\em control points} for the tiles \cite{Thur}.

\begin{definition} 
{ Let $\Tk$ be a fixed point of a primitive tile-substitution with expanding map $Q$. For each $\Tk$-tile $T$, fix a tile $\gamma T$ in the patch $\omega (T)$; 
choose $\gamma T$ with the same relative position for all tiles of the same 
type. This defines a map $\gamma : \Tk \to \Tk$ called the 
{\em tile map}. Then define the {\em control point} for a tile $T \in \Tk$ by 
\[ \{c(T)\} = \bigcap_{n=0}^{\infty} Q^{-n}(\gamma^n T).\]
}
\end{definition}

\noindent
The control points have the following properties:
\begin{itemize}
\item[(a)]\ $T' = T + c(T') - c(T)$, for any tiles $T, T'$ of the same type;
\item[(b)]\ $Q(c(T)) = c(\gamma T)$, for $T \in \Tk$.
\end{itemize}

It immediately follows from these properties that $\Lb = (\Lam_i)_{i\le m}$, where $\Lam_i = \{c(T):\ T\in \Tk\ \mbox{of type}\ i\}$, is a substitution Delone $m$-set. Moreover, $X:=\supp(\Lb) = \{c(T):\ T\in \Tk\}$ is a Delone set satisfying
$QX\subset X$. Thurston \cite{Thur} defined the address map $\phi:\,[X]\to \Z^s$, as in Section 1, and considered the induced action of the linear expanding map $Q$. See the references for the rest.

\subsection{Pseudo-self-affine tilings} We mention briefly another instance where the duality between substitution Delone $m$-sets and substitution tilings was useful.
The reader is referred to \cite[4.4.2]{FrankLN} and the original papers for more details.

\begin{definition}[\cite{BSJ}]
To a subset $F\subset \R^d$ and a tiling $\Tk$ of $\R^d$ we associate a $\Tk$-patch by $[F]^\Tk = \{T\in \Tk:\ \supp(T)\cap F\ne \es\}$. Let $\Tk$ and $\Tk_2$ be two tilings. Say that $\Tk_2$ is {\em locally derivable} (LD) from $\Tk_1$ with radius $R>0$ if for all $x,y\in \R^d$,
$$
[B_R(x)]^{\Tk_1} = [B_R(y)]^{\Tk_2} + (x-y)\ \Rightarrow\ [\{x\}]^{\Tk_2} = [\{y\}]^{\Tk_2}  + (x-y).
$$ 
If $\Tk_2$ is LD from $\Tk_1$ and $\Tk_1$ is LD from $\Tk_2$, the tilings are {\em mutually locally derivable} (MLD). 
\end{definition}

\begin{definition} Let $Q:\,\R^d\to \R^d$ be an expanding linear map.
A repetitive FLC tiling of $\R^d$ is called a {\em pseudo-self-affine tiling} with expansion $Q$ if $\Tk$ is LD from $Q\Tk$. If $Q$ is a similitude, the pseudo-self-affine tiling is called {\em pseudo-self-similar}.
\end{definition}

E. A. Robinson, Jr.\ \cite{Robi} conjectured that every pseudo-self-affine tiling is MLD with a self-affine tiling. This was settled in the affirmative: for pseudo-self-similar tilings in $\R^2$ in \cite{FrankSol}, and  in \cite{SolPseudo} in full generality. A few comments:

\begin{itemize}

\item In \cite{FrankSol} the method of ``redrawing the boundary'' was used (following \cite{Kenyon.construction} to some extent); as a result we obtained an MLD self-similar tiling where each tile is a topological disk bounded by a Jordan curve.

\item In contrast, in \cite{SolPseudo} we first constructed an MLD substitution Delone $m$-set in $\R^d$, and then applied Theorem~\ref{legal-rep}. This way we only know that each tile is a compact set, which is a closure of its interior; the tiles need not even be connected.

\item In both papers \cite{FrankSol,SolPseudo} we needed to pass from the expansion $Q$ for the original tiling to the expansion $Q^k$ for $k$ sufficiently large for the resulting self-similar (or self-affine) tiling, in order for the construction to work, similarly to the weaker form of the conjecture, discussed after  Theorem~\ref{th-KS}.

\end{itemize}


\section{Dynamical systems from Delone sets in $\R^d$}

This section is not intended to be a comprehensive survey (even in the ``local'' sense); we only touch on some aspects of this topic. For more  on the background and other topics see \cite{FrankLN} and references therein.


\medskip

Let $\Lb$ be an FLC Delone $m$-set and let $X_{\Lbs}$ be the collection of all 
Delone $m$-sets each of whose
clusters is a translate of a $\Lb$-cluster. We introduce a ``big ball'' metric
on $X_{\Lbs}$ in the standard way: two Delone $m$-sets are close if the agree exactly in a large neighborhood of the origin, possibly after a small translation. Precisely:
\be \label{metric-multisets}
\rho(\Lb_1,\Lb_2) := \min\{\tilde{\rho}(\Lb_1,\Lb_2), 2^{-1/2}\}\, ,
\ee
where
\begin{eqnarray} 
\tilde{\rho}(\Lb_1,\Lb_2)
&=&\mbox{inf} \{ \e > 0 : \exists~ x,y \in B_{\e}(0), \nonumber \\ \nonumber
&  & ~~~~~~~~~~ B_{1/{\e}}(0) \cap (-x + \Lb_1) = B_{1/{\e}}(0) 
\cap (-y + \Lb_2) \}\,. 
\end{eqnarray}
For the proof that $\rho$ is a metric, see \cite{LMS1}. 
It is well-known that $(X_{\Lbs},\rho)$ is compact (here the FLC assumption is crucial).
If FLC fails, the ``big ball'' metric is modified, basically considering the Hausdorff distance between the sets $B_{1/\e}(0)\cap \Lb_1$ and $B_{1/\e}(0)\cap \Lb_2$. Here we restrict ourselves to the FLC case.

\smallskip

Observe that $X_{\Lbs} = \overline{\{-h + \Lb : h \in \R^d \}}$ where the closure is taken in the topology induced by the metric $\rho$. 
The group $\R^d$ acts on $X_{\Lbs}$ by translations which are homeomorphisms, and we get a topological dynamical system $(X_{\Lbs},\R^d)$.
We should point out that most of the definitions and statements in this section have a parallel version in the tiling setting. It is usually not a problem to pass from the tiling framework to the Delone set framework and back, and we will do so freely.

Certain discrete-geometric and statistical properties of the Delone $m$-set $\Lb$ correspond to properties of the associated dynamical system. For instance, $\Lb$ is repetitive if and only if $(X_{\Lbs},\R^d)$ is minimal (i.e., every
orbit is dense), see \cite{Robi}.

We next discuss cluster (patch) frequencies and { invariant measures}.
For a cluster $\Pb$ and a bounded set $A\subset \R^d$ denote
$$
L_{\Pbs}(A) = \sharp\{x\in \R^d:\ x+\Pb \subset A\cap \Lb\},
$$
where $\sharp$ denotes the cardinality.
In plain language, $L_{\Pbs}(A)$ is the number of translates of $\Pb$
contained in $A$, which is clearly finite.
For a bounded set $F \subset \R^d$ and $r > 0$, let $(F)^{+r}:=
\{x \in \R^d:\,\dist(x,F) \le r\}$ denote the $r$-neighborhood of $F$.
A {\em van Hove sequence} for $\R^d$ is a sequence 
$\mathcal{F}=\{F_n\}_{n \ge 1}$ of bounded measurable subsets of 
$\R^d$ satisfying
\be \label{Hove}
\lim_{n\to\infty} \Vol((\partial F_n)^{+r})/\Vol(F_n) = 0,~
\mbox{for all}~ r>0.
\ee

\begin{definition} \label{def-ucf}{ 
Let $\{F_n\}_{n \ge 1}$ be a van Hove sequence.
The Delone $m$-set $\Lb$ has {\em uniform cluster frequencies} (UCF)
(relative to $\{F_n\}_{n \ge 1}$) if for any non-empty cluster $\Pb$, the limit
$$
\freq(\Pb,\Lb) = \lim_{n\to \infty} \frac{L_{\Pbs}(x+F_n)}{\Vol(F_n)} \ge 0
$$
exists uniformly in $x\in \R^d$.}
\end{definition}

Recall that a topological dynamical system is {\em uniquely ergodic}
if there is a unique invariant probability measure (which is then automatically
ergodic).
It is known (see e.g. \cite[Th.\ 2.7]{LMS1})
that for a Delone $m$-set $\Lb$ with FLC,
the dynamical system $(X_{\Lbs},\R^d)$ is uniquely ergodic
if and only if $\Lb$ has UCF.
A primitive FLC substitution Delone $m$-set is known to have UCF (see \cite{LMS03}), hence we get a uniquely ergodic $\R^d$-action $(X_{\Lbs},\R^d,\mu)$.


\subsection{Eigenvalues of Delone set dynamical systems}

Let $\mu$ be an ergodic invariant Borel probability
measure for the dynamical system 
$(X_{\Lbs},\R^d)$.
A point $\alpha =(\alpha_1,\ldots,\alpha_d) \in \R^d$ is called 
an eigenvalue (or dynamical eigenvalue) for the $\R^d$-action if there exists an eigenfunction
$f\in L^2(X_{\Lbs},\mu),$ that is, $\ f\not\equiv 0$ and
\be 
\label{eq-eigen} f(-g+\Sk)= e^{2 \pi i \langle g, \alpha\rangle} f(\Sk),\ \ \ \mbox{for all}
\ \ g\in \R^d.
\ee
Here $\langle g, \alpha\rangle$ is the usual scalar product in $\R^d$ and the equality is understood in $L^2$, that is, for $\mu$-a.e.\ $\Sk$.

An eigenvalue $\alpha$ is  a {\em continuous} or {\em topological} eigenvalue if (\ref{eq-eigen}) has a continuous solution. A characterization of topological eigenvalues was obtained in \cite{SolDelone}.

\begin{definition}
Let $\Lb$ be a Delone $m$-set of finite type. We say that $\by \in \R^d$ is a {\em topological $\delta$-almost-period} for $\Lb$ if
$$
\Lb\cap B_{1/\delta}(0) = (\Lb-y) \cap B_{1/\delta}(0).
$$
Denote by $\Psi_\delta(\Lb)$ the set of topological $\delta$-almost-periods.
\end{definition}

\begin{theorem}[\cite{SolDelone}] \label{th-topeigen}
Let $\Lb$ be a repetitive Delone $m$-set of finite type.  Then $\alpha$ is a topological eigenvalue for $(X_{\Lbs},\R^d)$ if and only if
\be \label{eq-topeigen}
\lim_{\delta\to 0} \sup_{y\in \Psi_\delta(\Lbs)} |e^{2 \pi i \langle y, \alpha\rangle}-1|=0.
\ee
\end{theorem}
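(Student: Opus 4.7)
The plan is to prove both directions by exploiting the geometric content of the metric $\rho$: translating $\Lb$ by a vector $g$ and landing close to $\Lb$ is essentially the same as $g$ being a topological almost-period (up to a small shift).

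\textbf{Necessity ($\Rightarrow$).} Suppose $f$ is a continuous eigenfunction with eigenvalue $\alpha$. I would first observe that by repetitiveness the system $(X_{\Lbs},\R^d)$ is minimal, so $f$ has no zeros on $X_{\Lbs}$: the set $\{f\ne 0\}$ is open, non-empty and translation-invariant up to a unit-modulus factor. Normalize so that $|f|\equiv 1$ (or just pick $\Lb_0\in X_{\Lbs}$ with $f(\Lb_0)\ne 0$). By uniform continuity of $f$ on the compact space $X_{\Lbs}$, for every $\eps>0$ there is $\delta_0>0$ with $\rho(\Sk,\Sk')<\delta_0\Rightarrow |f(\Sk)-f(\Sk')|<\eps$. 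If $y\in\Psi_\delta(\Lb)$ with $\delta<\delta_0$, then by the definition of $\Psi_\delta$ and of $\rho$ one has $\rho(\Lb,-y+\Lb)\le\delta$, hence $|f(-y+\Lb)-f(\Lb)|<\eps$. Using the eigenfunction relation (\ref{eq-eigen}) this gives $|e^{2\pi i\langle y,\alpha\rangle}-1|\cdot|f(\Lb)|<\eps$, which yields (\ref{eq-topeigen}) after sending $\eps\to 0$.

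\textbf{Sufficiency ($\Leftarrow$).} The strategy is to define $f$ on the orbit $\{-g+\Lb:g\in\R^d\}$ by
$$
f(-g+\Lb) := e^{2\pi i\langle g,\alpha\rangle},
$$
and then extend by continuity to $X_{\Lbs}$. Minimality guarantees the orbit is dense, so the only thing to check is that $f$ is well-defined and uniformly continuous on the orbit. Both reduce to the same key lemma, which is the heart of the argument:

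\emph{Geometric lemma.} There is an absolute constant (depending only on shrinkage of neighborhoods) so that if $\rho(-g+\Lb,\Lb)<\delta$, then there exists $v\in\Psi_{\delta'}(\Lb)$ with $|v-g|<2\delta$ and $\delta'=\delta/(1-\delta^2)$ (or some similar rate), in particular $\delta'\to 0$ as $\delta\to 0$.

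To prove this, unpack the definition of $\rho$: there exist $x,y\in B_\delta(0)$ with $B_{1/\delta}(0)\cap(-x-g+\Lb)=B_{1/\delta}(0)\cap(-y+\Lb)$. Setting $v:=g+x-y$ and translating by $y$, one gets $B_{1/\delta-\delta}(0)\cap(-v+\Lb)=B_{1/\delta-\delta}(0)\cap\Lb$, which is exactly $v\in\Psi_{\delta'}(\Lb)$ for an appropriate $\delta'$.

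With the lemma in hand, the argument is quick: if $-g_n+\Lb\to\Sk$ in $X_{\Lbs}$ then $\rho(-(g_n-g_m)+\Lb,\Lb)\to 0$ as $n,m\to\infty$ (by the triangle inequality and translation-invariance of $\rho$), so the lemma yields $v_{n,m}\in\Psi_{\delta'_{n,m}}(\Lb)$ with $|v_{n,m}-(g_n-g_m)|\to 0$ and $\delta'_{n,m}\to 0$. The hypothesis (\ref{eq-topeigen}) gives $|e^{2\pi i\langle v_{n,m},\alpha\rangle}-1|\to 0$, and combining with $|v_{n,m}-(g_n-g_m)|\to 0$ shows $e^{2\pi i\langle g_n,\alpha\rangle}$ is Cauchy. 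Hence $f$ extends to a continuous function $\tilde f:X_{\Lbs}\to\C$ with $|\tilde f|\equiv 1$; checking (\ref{eq-eigen}) for $\tilde f$ is immediate from the defining formula together with continuity.

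\textbf{Main obstacle.} The only delicate point is the geometric lemma converting $\rho$-proximity into an almost-period, together with keeping track of how $\delta'$ depends on $\delta$ so that $\delta_n\to 0$ still forces $\delta'_n\to 0$. Everything else is routine from minimality, uniform continuity on a compact space, and the definition of the metric $\rho$.
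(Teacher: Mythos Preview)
The paper does not actually give a proof of this theorem; it cites \cite{SolDelone} and remarks that the argument there transfers verbatim to $m$-sets. Your outline is essentially the standard argument from that reference, and the necessity direction as well as the ``geometric lemma'' are correct as written.

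There is, however, a genuine gap in the sufficiency direction. You write that ``$\rho(-(g_n-g_m)+\Lb,\Lb)\to 0$ by the triangle inequality and translation-invariance of $\rho$'', but the metric $\rho$ is \emph{not} translation-invariant: the ball $B_{1/\eps}(0)$ in its definition is anchored at the origin, so shifting both arguments by $g_m$ moves the region of comparison and can change the distance drastically when $|g_m|$ is large. What you actually get from $\rho(-g_n+\Lb,-g_m+\Lb)<\eps$ is that $\Lb$ and $\Lb-v$ (with $v$ close to $g_n-g_m$) agree on a large ball centered near $g_m$, not near the origin, so your geometric lemma does not apply directly.

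The fix uses repetitiveness, which you have not invoked at all in the sufficiency half. Since $\Lb$ is repetitive, $\Psi_\delta(\Lb)$ is relatively dense: for any center $z$ there is $w\in\Psi_\delta(\Lb)$ with $|w-z|\le M_{\Lb}(1/\delta)$. Choosing such a $w$ inside the large ball of agreement (possible once $1/\eps>M_{\Lb}(1/\delta)+1/\delta$) and translating by $-w$ moves that ball so that it contains $B_{1/\delta}(0)$, and one obtains $v+w\in\Psi_\delta(\Lb)$. Thus $v$ is a \emph{difference} of two elements of $\Psi_\delta(\Lb)$, and
\[
|e^{2\pi i\langle v,\alpha\rangle}-1|=|e^{2\pi i\langle v+w,\alpha\rangle}-e^{2\pi i\langle w,\alpha\rangle}|\le 2\sup_{u\in\Psi_\delta(\Lb)}|e^{2\pi i\langle u,\alpha\rangle}-1|,
\]
which, together with $|v-(g_n-g_m)|<2\eps$, gives the Cauchy property you need. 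So your plan is salvageable, but the ``main obstacle'' is not the geometric lemma (which is fine) --- it is precisely this use of repetitiveness to bring the comparison back to the origin.
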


In \cite{SolDelone} the theorem is proved for a single Delone set dynamical system, but the proof transfers to the case of Delone $m$-sets without any changes.

\medskip

There is a connection between the {\em diffraction spectrum} of a Delone $m$-set and the dynamical spectrum, but this topic is beyond the scope of these Notes. On this matter, the reader should consult e.g., \cite{BG,BL} and references therein.

N. Strungaru \cite{Str} proved that any Meyer set has a relatively dense of Bragg peaks, which implies, via the link with the dynamical spectrum, that the set of eigenvalues for the associated dynamical system is relatively dense as well. More recently, J. Kellendonk and L. Sadun \cite{KeSa14} proved that the latter property holds for topological eigenvalues as well. In fact, they established the following

\begin{theorem}[{\cite[Th.\,1.1]{KeSa14}}] \label{th-LS}
A repetitive FLC Delone set dynamical system in $\R^d$ has $d$ linearly independent topological eigenvalues if and only if it is topologically conjugate to a Meyer set dynamical system.
\end{theorem}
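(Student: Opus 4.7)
For $(\Leftarrow)$: the set of topological eigenvalues is a topological-conjugacy invariant, so I reduce to showing that any repetitive FLC Meyer dynamical system has $d$ linearly independent topological eigenvalues. Using Theorem~\ref{th-meyer}(iv), embed $\Lb$ in a non-degenerate cut-and-project set $X(\Lam,\Om)\subset\R^d$. The hull $X_{\Lbs}$ inherits from the ambient cut-and-project hull a continuous $\R^d$-equivariant factor map onto the torus $\T^n=\R^n/\Lam$, with the $\R^d$-action on the torus induced by the embedding $E^d\hookrightarrow\R^n$. Pulling back the characters of $\T^n$---indexed by the dual lattice $\Lam^*\subset(\R^n)^*$---yields continuous eigenfunctions on $X_{\Lbs}$ whose eigenvalues are the restrictions $\beta^*|_{E^d}\in(E^d)^*$ to physical space. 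Since the restriction $(\R^n)^*\to(E^d)^*$ is surjective and $\Lam^*$ has full rank $n\ge d$, these restrictions span $(E^d)^*\simeq\R^d$, providing $d$ linearly independent topological eigenvalues.

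For $(\Rightarrow)$: given linearly independent topological eigenvalues $\alpha_1,\dots,\alpha_d$ with unit-modulus continuous eigenfunctions $f_j$, form the continuous equivariant factor map $F=(f_1,\dots,f_d):X_{\Lbs}\to\T^d$ satisfying $F(-g+\Sk)=F(\Sk)-Ag\pmod{\Z^d}$, where $A$ is the invertible $d\times d$ matrix with rows $\alpha_j$. By Theorem~\ref{th-topeigen}, the almost-periods $\Psi_\delta(\Lb)$ accumulate on the lattice $L:=A^{-1}\Z^d$ as $\delta\to 0$. The plan is to construct a Meyer Delone $m$-set---call its hull $Y$---topologically conjugate to $X_{\Lbs}$, by \emph{lifting} each $\Sk\in X_{\Lbs}$ to the decorated point set $\{(x,\,F(\Sk)-Ax\bmod\Z^d):x\in\supp(\Sk)\}\subset\R^d\times\T^d$, with $\T^d$ playing the role of internal space. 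Quantizing the internal coordinate by a finite partition of $\T^d$ subordinate to a fundamental domain of $L$ turns this lift into a finite-alphabet colored Delone set in $\R^d$, and projecting through a small window recovers a model set, hence a Meyer set by Theorem~\ref{th-meyer}(iv).

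The main obstacle is upgrading this lift from a continuous equivariant map to an honest topological conjugacy. Two subtleties appear: (a) the quantization of the $\T^d$-label must be fine enough for the colored Delone set to have finite local complexity and for distinct hull points to admit distinct lifts; (b) the interaction of the window boundary with the $\R^d$-orbit must be controlled by choosing a generic fundamental domain so that no points of the orbit lie on $\partial\Om$. Both are resolved by exploiting the full rank of $A$ together with repetitivity and FLC of $\Lb$: two hull elements producing identical lifts must agree on arbitrarily large patches (since the $F$-value plus local geometry determines the patch through an address-type Lipschitz argument in the spirit of Theorem~\ref{Del-Lip}), and the linear independence of the $\alpha_j$ prevents nontrivial fibers. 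Navigating this local-to-global reconstruction of $\Lb$ from its decorated version---so that the quantized lift generates a Meyer hull homeomorphic to $X_{\Lbs}$ rather than a proper equicontinuous factor---is the technical heart of the Kellendonk--Sadun argument.
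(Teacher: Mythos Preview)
The paper does not contain a proof of this theorem. It is stated as a result of Kellendonk and Sadun, cited as \cite[Th.\,1.1]{KeSa14}, and the exposition immediately moves on to the related (but different) substitution-system result of Lee--Solomyak. There is therefore no ``paper's own proof'' to compare your proposal against.

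That said, a few comments on your sketch. For $(\Leftarrow)$, the idea of extracting a torus factor from the cut-and-project scheme is on the right track, but as written it is incomplete: Theorem~\ref{th-meyer}(iv) only says that a Meyer set is a \emph{subset} of some cut-and-project set, and the inclusion of hulls need not be straightforward. One must actually build the continuous equivariant map $X_{\Lbs}\to\T^n$ (typically via the ``torus parametrization'' or ``star map''), and the argument that pulled-back characters give $d$ linearly independent eigenvalues uses that $\pi|_{\Lam}$ is one-to-one, which you invoke implicitly but should make explicit. For $(\Rightarrow)$, you correctly identify that the hard direction is upgrading the torus factor to a conjugacy with a Meyer hull, and you honestly flag the two obstacles (quantization and injectivity). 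However, your final paragraph is more of a plausibility outline than an argument: the assertion that ``linear independence of the $\alpha_j$ prevents nontrivial fibers'' is not justified, and in fact the map $F$ to $\T^d$ is generally \emph{not} injective---the Kellendonk--Sadun construction does not work by making $F$ a conjugacy, but rather by using $F$ together with the original local data to relabel points so that the resulting colored Delone set is Meyer while retaining enough information to recover $\Lb$. As written, your proposal for $(\Rightarrow)$ is a description of the landscape rather than a proof.
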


In the case of substitution systems, we obtained the following result earlier, jointly with J.-Y. Lee:

\begin{theorem}[{\cite[Th.\,4.14]{LS08}}] Let $\Lb=(\Lam_j)_{j=1}^m$ be a representable primitive FLC substitution Delone $m$-set. The set of eigenvalues for the $\R^d$-action $(X_{\Lbs},\R^d,\mu)$ is relatively dense in $\R^d$ if and only if $\supp(\Lb) = \bigcup_{j=1}^m \Lam_j$ is a Meyer set.
\end{theorem}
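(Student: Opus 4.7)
The plan is to establish the two implications by leveraging the substitution-invariance $Q\Xi(\Lb)\subset \Xi(\Lb)$ of the inter-atomic vector set (\ref{def-Xi}) together with the characterization of topological eigenvalues in Theorem~\ref{th-topeigen}. A preliminary step, standard in the primitive FLC substitution setting, is the identification of measurable eigenvalues (with respect to the unique ergodic measure $\mu$) with topological eigenvalues, so that Theorem~\ref{th-topeigen} applies to the $L^2$-eigenvalue set of the hypothesis.

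For the direction ($\Leftarrow$), suppose $\supp(\Lb)$ is Meyer. By Theorem~\ref{th-meyer}, $\supp(\Lb)$ sits inside a non-degenerate cut-and-project set $X(\Lam, \Om)$, and the address map $\phi:[\supp(\Lb)] \to \Z^s$ is almost linear with some linear part $L:\R^d \to \R^s$. I would construct continuous eigenfunctions from characters of the lifted lattice: for each $\alpha\in \R^d$ lying in the image of the dual lattice $\Lam^*$ under projection to the physical space, the almost-linearity bound $\|\phi(\bx)-L\bx\|\le C$ allows one to define via $\bx \mapsto e^{2\pi i \langle \alpha, \bx\rangle}$ a bounded continuous function on $X_{\Lbs}$ that satisfies the eigenfunction equation (\ref{eq-eigen}). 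Because $\Lam^*$ has full rank in $\R^n$, its projection to $\R^d$ is relatively dense, producing a relatively dense set of eigenvalues.

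For the direction ($\Rightarrow$), suppose the eigenvalue set is relatively dense. The key input is the characterization that, for a primitive substitution Delone $m$-set, $\alpha$ is an eigenvalue if and only if $\lim_{n\to\infty} e^{2\pi i \langle\alpha, Q^n\bbe\rangle} = 1$ uniformly in $\bbe$ over bounded subsets of $\Xi(\Lb)$; this can be derived by combining Theorem~\ref{th-topeigen} with representability, which lets one identify the topological almost-periods $\Psi_\delta(\Lb)$ (up to bounded error) with those $\bbe\in \Xi(\Lb)$ whose surrounding $\Lb$-clusters match on balls of radius $\gtrsim 1/\delta$. A Pontryagin-type duality argument then transfers relative density of the eigenvalue set into uniform discreteness of $\Xi(\Lb)$, hence of $\supp(\Lb) - \supp(\Lb)$, which is the Meyer property.

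The main obstacle is the duality step in $(\Rightarrow)$: showing quantitatively that if $\Xi(\Lb)$ were not uniformly discrete, one could produce arbitrarily small $\bbe \in \Xi(\Lb)$ whose forward iterates $\{Q^n\bbe\}_{n\ge 0}$ (which lie in $\Xi(\Lb)$ by substitution-invariance) impose too many independent Diophantine constraints $\langle \alpha, Q^n\bbe\rangle \in \Z + o(1)$ on any eigenvalue $\alpha$, forcing the eigenvalue set into a sparse subset of $\R^d$ and contradicting relative density. Carrying this out requires careful control over how small return vectors propagate under the expansion map $Q$, and is where the specific structure of the representable primitive substitution Delone $m$-set is used most crucially.
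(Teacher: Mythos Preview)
Your outline diverges from the paper's route in both directions, and in the $(\Rightarrow)$ direction there is a genuine gap.

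For $(\Leftarrow)$, the paper does not build eigenfunctions from the dual lattice of a cut-and-project scheme; it simply invokes Strungaru's theorem that any Meyer set has a relatively dense set of Bragg peaks, together with the known link between diffraction and dynamical spectrum. Your approach via Theorem~\ref{th-meyer}(iv) is in spirit reasonable, but being a \emph{subset} of a model set does not by itself give a factor map from $(X_{\Lbs},\R^d)$ onto a torus, so the claim that dual-lattice characters yield continuous eigenfunctions on $X_{\Lbs}$ needs a real argument that you have not supplied.

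For $(\Rightarrow)$, the paper's mechanism is number-theoretic and you miss it entirely. The argument is: by Theorem~\ref{translation-vector-formula}, every eigenvalue $\alpha$ satisfies $\|\langle Q^n x,\alpha\rangle\|\to 0$ for all $x\in\Xi(\Lb)$; relative density of the eigenvalue set then lets one apply the K\"ornyei--Mauduit generalization of Pisot's theorem to conclude that the spectrum of $Q$ forms a \emph{Pisot family}. From this algebraic condition on $Q$ one deduces (in \cite{LS08}) the Meyer property. Your proposed ``Pontryagin-type duality'' shortcut does not get off the ground, and contains a concrete error: under FLC, $\Xi(\Lb)$ is already locally finite, so there are no ``arbitrarily small $\bbe\in\Xi(\Lb)$'' to produce. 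The failure of the Meyer property is that $\bigl(\supp(\Lb)-\supp(\Lb)\bigr)-\bigl(\supp(\Lb)-\supp(\Lb)\bigr)$ accumulates at $0$, not that $\Xi(\Lb)$ itself has small nonzero elements. Consequently the contradiction you sketch cannot be set up as stated, and there is no evident way to bypass the Pisot-family step.
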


As a corollary, we showed in \cite{LS08} that if the $\R^d$-action $(X_{\Lbs},\R^d,\mu)$ has purely discrete spectrum, then $\supp(\Lb)$ is a Meyer set. This was an answer to a question of J. Lagarias from \cite{Lag00}.

For the proof of sufficiency of the Meyer property we relied on the result of Strungaru \cite{Str} quoted above. The proof of necessity in Theorem~\ref{th-LS} proceeds via the notion of {\em Pisot families}. Let $Q$ be the expansion map for the substitution Delone set. By Lemma~\ref{lem-alg},  the set of eigenvalues of $Q$ consists of algebraic integers.
Following Mauduit \cite{Maud}, we
say that a set $\FrP$ of algebraic integers forms a {\em Pisot family} if for every $\lam\in \FrP$
 and every Galois conjugate $\lam'$ of $\lam$, if
$\lam'\not\in \FrP$, then $|\lam'| < 1$. 

The link from eigenvalues to Number Theory comes from the following, which we restate here in different terms:

\begin{theorem} \cite[Th.\,4.3]{soltil} \label{translation-vector-formula}
Let $\Lb$ be a repetitive substitution Delone $m$-set with expansion map $Q$, which has FLC. Let $\Xi(\Lb)$ be the set of ``inter-atomic'' vectors defined in (\ref{def-Xi}).
If $\alpha \in \R^d$ is an eigenvalue for $(X_{\Lbs}, \R^d, \mu)$, 
then for any $x \in \Xi(\Lb)$ we have 
$\|\langle Q^n x, \alpha \rangle\| \to 0$ as $n\to \infty$.
\end{theorem}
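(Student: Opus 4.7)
The plan is to exploit the substitution structure of $\Lb$ to manufacture, for every $n$, two large congruent subclusters of $\Lb$ whose positions differ by exactly $Q^n x$, and then invoke the eigenfunction equation to convert this geometric almost-periodicity into the arithmetic statement $\|\langle Q^n x,\alpha\rangle\|\to 0$.

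First I would set up the geometric coincidence. Write $x = \bx' - \bx$ with $\bx,\bx' \in \Lam_i$ for some common type $i$. Since $\Phi(\Lb) = \Lb$, iterating gives $\Phi^n(\{\bx\}), \Phi^n(\{\bx'\}) \subset \Lb$ for every $n$. Because $\bx$ and $\bx'$ have the same type $i$, the $m$-set substitution $\Phi^n$ treats them identically, so
$$
\Phi^n(\{\bx'\}) \;=\; \Phi^n(\{\bx\}) + Q^n(\bx'-\bx) \;=\; \Phi^n(\{\bx\}) + Q^n x.
$$
By primitivity and the adjoint-system description (cf.\ (\ref{eq-til}) and the discussion surrounding Theorem \ref{th-PF}), the cluster $\Phi^n(\{\bx\}) - Q^n\bx$ fills out the region $Q^n A_i$, where $A_i$ has nonempty interior; hence its inradius is $\ge c_0|Q|^n$ for some $c_0 > 0$. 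Setting $\bc_n := Q^n\bx$ we obtain that $\Lb - \bc_n$ and $(\Lb - \bc_n) - Q^n x$ agree on the ball $B(0, c_0|Q|^n)$, and therefore
$$
\rho\bigl(\Lb - \bc_n,\; (\Lb - \bc_n) - Q^n x\bigr) \longrightarrow 0.
$$

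Next I would extract the conclusion from the eigenfunction. Let $f \in L^2(X_{\Lbs},\mu)$ satisfy (\ref{eq-eigen}); by ergodicity $|f|$ is constant a.e., so normalize $|f|\equiv 1$ a.e.\ Since a primitive FLC substitution Delone $m$-set has UCF, $(X_{\Lbs},\R^d,\mu)$ is uniquely ergodic. Apply Luzin's theorem to fix a compact $K\subset X_{\Lbs}$ with $\mu(K) > 1 - \frac{1}{4}$ on which $f$ is continuous. Unique ergodicity combined with the van Hove property lets us choose, for each $n$, a small correction $h_n$ (bounded, even tending to $0$) so that both $\Sk_n := \Lb - \bc_n - h_n$ and $\Sk_n - Q^n x$ lie in $K$; this is a standard density-pigeonhole using that the set of return times to $K$ has uniform density $\mu(K) > \frac{3}{4}$. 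The almost-sure eigenvalue equation, applied at $\Sk_n$, yields
$$
f(\Sk_n - Q^n x) \;=\; e^{2\pi i \langle Q^n x,\alpha\rangle}\, f(\Sk_n),
$$
while continuity of $f|_K$ together with $\rho(\Sk_n,\Sk_n-Q^n x)\to 0$ forces $|f(\Sk_n - Q^n x) - f(\Sk_n)|\to 0$. Since $|f|=1$ on $K$, these combine to $|e^{2\pi i\langle Q^n x,\alpha\rangle} - 1|\to 0$, which is the claim $\|\langle Q^n x,\alpha\rangle\|\to 0$.

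The main obstacle is the third paragraph: bridging the measurable eigenfunction to the pointwise comparison at the geometrically controlled pair $(\Sk_n,\Sk_n - Q^n x)$. For a topological eigenvalue one plugs into a continuous $f$ and is done instantly, but in the purely measurable case one must combine Luzin regularization of $f$ with the \emph{uniform} genericity guaranteed by unique ergodicity, so that both members of the pair simultaneously land in the continuity set $K$ for infinitely many (indeed all large) $n$. The geometric half (Paragraph~2) is the easy side of the argument and is essentially a direct unpacking of the $m$-set substitution identity $\Phi^n(\Lb)=\Lb$.
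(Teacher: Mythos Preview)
The paper itself does not prove this theorem; it is quoted from \cite{soltil}, where it is established in the self-affine \emph{tiling} setting. Your argument is essentially the one given there. On the measure-theoretic half you have the right idea; two small corrections: $h_n$ need only be uniformly bounded (a bound independent of $n$ suffices, and $h_n\to 0$ is neither needed nor generally achievable), and when you apply the a.e.\ eigenfunction identity at the specific point $\Sk_n$ you must also ensure that $\Sk_n$ lies in the conull set where that identity holds for the translate $Q^n x$ --- arrange this by excising one further null set (for each $n$) before fixing $K$.

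The step that deserves more care is what you call ``the easy side''. From $\Phi^n(\{\bx'\}) = \Phi^n(\{\bx\}) + Q^n x \subset \Lb$ you conclude that $\Lb - \bc_n$ and $(\Lb-\bc_n) - Q^n x$ agree on a large ball. That inference needs more than the inclusion $\Phi^n(\{\bx\})\subset\Lb$: you need that every point of $\Lb$ inside the supertile region $Q^n\bx + Q^n A_i$ already belongs to $\Phi^n(\{\bx\})$, i.e.\ that no points of $\Phi^n(\{\by\})$ with $\by\ne\bx$ intrude into that region. This is exactly what representability (Theorem~\ref{legal-rep}) supplies --- the adjoint tiles form a genuine tiling, so the level-$n$ supertiles partition both space and $\Lb$ --- and it is automatic in the tiling framework of \cite{soltil}; but it is not among the hypotheses stated here for Delone $m$-sets. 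So either add representability, which is consistent with the surrounding discussion and with the cited source, or supply a separate argument covering the general repetitive FLC case.
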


Here  $\|t\|$ denotes the distance from $t$ to the nearest integer.
Then we apply the following result, a generalization of the classical Pisot's theorem, which we only partially state:

\begin{theorem}[{\cite{Korn,Maud}}]
Let $\lam_1, \dots, \lam_r$ be distinct algebraic numbers such that $|\lambda_i| \ge 1$, $i = 1, \dots, r$, and let
$P_1, \dots, P_r$ be nonzero polynomials with complex coefficients. 
If $\sum_{i=1}^r P_i (n) \lam_i^n$ is real for all $n$ and 
\[ \lim_{n \to \infty} \Bigl\| \sum_{i=1}^r P_i (n) \lam_i^n\Bigr\| = 0,\]
Then $\{\lam_1,\ldots,\lam_r\}$ is a Pisot family.
\end{theorem}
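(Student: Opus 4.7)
The plan is to use the nearest-integer sequence of $a_n := \sum_{i=1}^r P_i(n)\lambda_i^n$ as a bridge to Galois theory. Let $b_n \in \Z$ be the integer nearest to $a_n$, so that $\varepsilon_n := b_n - a_n \to 0$. I will show that $b_n$ itself admits an exponential-polynomial representation indexed by the \emph{full} Galois orbit of $\{\lambda_1,\ldots,\lambda_r\}$, and then exploit the fact that $b_n \in \Z$ (hence fixed by $\mathrm{Gal}(\ov{\Q}/\Q)$) to force every large-modulus conjugate of any $\lambda_i$ back into $\{\lambda_1,\ldots,\lambda_r\}$.

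First I would build a linear recurrence for $b_n$ with integer coefficients. The sequence $a_n$ is annihilated by $R(E) := \prod_{i=1}^r (E-\lambda_i)^{\deg P_i+1}$, where $E$ denotes the shift. Let $L$ be the normal closure of $\Q(\lambda_1,\ldots,\lambda_r)$, and define
\[
\tilde R(x) := \prod_{\sigma \in \mathrm{Gal}(L/\Q)} \sigma(R)(x).
\]
After clearing denominators, $\tilde R \in \Z[x]$, and its roots are precisely the Galois orbit $\{\mu_1,\ldots,\mu_s\} \supseteq \{\lambda_1,\ldots,\lambda_r\}$ (with appropriate multiplicities). Since $R \mid \tilde R$, we have $\tilde R(E) a_n = 0$; hence $\tilde R(E) b_n = \tilde R(E) \varepsilon_n \to 0$, and being integer-valued it must vanish for all sufficiently large $n$. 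Consequently $b_n = \sum_{j=1}^s Q_j(n)\mu_j^n$ for polynomials $Q_j$ whose coefficients lie in $L$ (they are determined by Vandermonde-type interpolation from integer values of $b_n$).

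Next I would match the two representations. Invoking the standard fact that an exponential polynomial $\sum_\nu T_\nu(n)\nu^n$ tending to $0$ must satisfy $T_\nu \equiv 0$ for every $\nu$ with $|\nu|\ge 1$, the relation $b_n - a_n \to 0$ forces: whenever $|\mu_j|\ge 1$ and $Q_j \not\equiv 0$, we have $\mu_j = \lambda_i$ for some $i$ with $Q_j = P_i$; conversely, since every $|\lambda_i|\ge 1$ and $P_i \not\equiv 0$, each $\lambda_i$ appears among the $\mu_j$ with matching coefficient $Q_j = P_i$. In particular, the polynomials $P_i$ (a priori with complex coefficients) necessarily have coefficients in $L$.

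To finish, fix $i$ and a Galois conjugate $\lambda'$ of $\lambda_i$ with $|\lambda'|\ge 1$, and choose $\sigma \in \mathrm{Gal}(L/\Q)$ with $\sigma(\lambda_i) = \lambda'$. Applying $\sigma$ to $b_n = \sum_j Q_j(n)\mu_j^n$ and using $\sigma(b_n) = b_n$ yields
\[
b_n = \sum_{j=1}^s \sigma(Q_j)(n)\,\sigma(\mu_j)^n.
\]
By uniqueness of the exponential-polynomial representation, the polynomial attached to $\lambda' = \sigma(\lambda_i)$ is $\sigma(P_i) \not\equiv 0$, so the matching step forces $\lambda' \in \{\lambda_1,\ldots,\lambda_r\}$, which is precisely the Pisot family condition. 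The hard part will be the matching step when some $|\mu_j|=1$: growth comparison fails on the unit circle, and one needs a Diophantine ingredient --- typically Ces\`aro averaging of $|b_n - a_n|^2$ combined with equidistribution of $\{\nu^n\}$ for distinct unit-modulus $\nu$ --- to conclude that the unit-modulus contribution to $b_n - a_n$ vanishes identically rather than merely staying bounded.
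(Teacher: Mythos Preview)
The paper does not prove this theorem; it is quoted from the cited references and used as a black box, so there is no argument in the text to compare against.

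On its own merits your outline is the standard route to Pisot-type theorems and is sound. Two remarks. First, the ``standard fact'' you invoke and the unit-circle difficulty you flag at the end are the same issue: the Ces\`aro/orthogonality argument you sketch is exactly what shows that an exponential polynomial tending to zero has no nonzero term at any base of modulus $\ge 1$ (for $|\nu|>1$ one rescales by the maximal modulus to reduce to the unit circle), so there is no extra obstacle beyond what you already identify. Second, the paper's definition of \emph{Pisot family} presupposes that the $\lambda_i$ are algebraic integers, which your non-monic $\tilde R$ does not directly deliver; this does follow from your setup via Fatou's lemma applied to $\sum_{n\ge N} b_n x^n \in \Z[[x]]\cap\Q(x)$, forcing the minimal denominator to have constant term $\pm 1$ and hence every active base --- in particular each $\lambda_i$, since $P_i\not\equiv 0$ --- to be an algebraic integer. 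You should make that step explicit.
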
 



%

%


%

%
\begin{acknowledgement}
Thanks to CIRM, Luminy, for hospitality and for providing a perfect work environment. 
The author is grateful to Shigeki Akiyama, the 2017 Morlet Chair, and to Pierre Arnoux for the invitation and for running a very successful and stimulating program. The research of B.S. was supported by the Israel Science Foundation (Grant 396/15).
\end{acknowledgement}
%

%
%
%

\end{document}